\documentclass[12pt,twoside,reqno]{amsart}

\usepackage[OT1]{fontenc}
\usepackage{type1cm}

\usepackage{comment}
\usepackage{enumerate}
\usepackage{mathabx}

\usepackage{amsthm}
\RequirePackage{amsmath,amsfonts,amssymb,amsthm}

\RequirePackage[dvips]{graphicx}

\usepackage{psfrag}
\usepackage[usenames]{color}
  \numberwithin{equation}{section}

\usepackage[active]{srcltx}  

  \newcommand{\N}{\mathbb{N}}         
  \newcommand{\Z}{\mathbb{Z}}         
  \newcommand{\R}{\mathbb{R}}         
  \newcommand{\C}{\mathbb{C}}         
  \newcommand{\D}{\mathbb{D}}         
  \newcommand{\CmR}{\mathbb{C}\setminus\mathbb{R}}
  \newcommand{\DmR}{\mathbb{D}\setminus\mathbb{R}}
  \newcommand{\PP}{\mathbb{P}}         
  \newcommand{\bba}{\mathbf{a}}

\def\Dk{{\mathcal D}}
\def\Ak{{\mathcal A}}
\def\Bk{{\mathcal B}}
\def\Ek{{\mathcal E}}
\def\Nk{{\mathcal N}}

  \newcommand{\e}{\varepsilon}

  \newcommand{\lam}{\lambda}

  \newcommand{\ov}{\overline}
  \newcommand{\eps}{\varepsilon}
  \newcommand{\wtil}{\widetilde}
  \newcommand{\be}{\begin{equation}}
  \newcommand{\ee}{\end{equation}}
  \newcommand{\ba}{\begin{align}}
  \newcommand{\ea}{\end{align}}
  \newcommand{\bas}{\begin{align*}}
  \newcommand{\eas}{\end{align*}}
  \def\bx{{\mathbf x}}
  
  \def\half{\frac{1}{2}}

  
  

  \newtheorem{thm}{Theorem}[section]
  \newtheorem{mainthm}{Theorem}
  
  \newtheorem{lemma}[thm]{Lemma}
  \newtheorem{prop}[thm]{Proposition}
  \newtheorem{cor}[thm]{Corollary}

  \theoremstyle{remark}
  
  \newtheorem{rems}[thm]{Remarks}

\addtolength{\hoffset}{-1.15cm}
\addtolength{\textwidth}{2.3cm}
\addtolength{\voffset}{0.45cm}
\addtolength{\textheight}{-0.9cm}

\pagestyle{headings}

\begin{document}

\title{Absolute continuity of complex Bernoulli convolutions}

\author{Pablo Shmerkin}
\address{Department of Mathematics and Statistics, Torcuato Di Tella University, and CONICET, Buenos Aires, Argentina}
\email{pshmerkin@utdt.edu}
\thanks{P.S. was partially supported by Projects PICT 2011-0436 and PICT 2013-1393 (ANPCyT)}
\urladdr{http://www.utdt.edu/profesores/pshmerkin}

\author{Boris Solomyak}
\address{University of Washington and Bar-Ilan Univeristy}
\email{solomyak@math.washington.edu}
\thanks{B.S. has been supported in part by  NSF grants DMS-0968879 and DMS-1361424.}
\urladdr{http://www.math.washington.edu/~solomyak/personal.html}

\subjclass[2010]{Primary 28A78, 28A80, secondary 37A45, 42A38}
\keywords{absolute continuity, self-similar measures, Hausdorff dimension, Bernoulli convolutions}

\begin{abstract}
We prove that  complex Bernoulli convolutions are absolutely continuous in the supercritical parameter region, outside of an exceptional set of parameters of zero Hausdorff dimension. Similar results are also obtained in the biased case, and for other parametrized families of self-similar sets and measures in the complex plane, extending earlier results.
\end{abstract}

\maketitle

\section{Introduction and statement of results}

Recall that, given $\lam\in (1/2,1)$, the \emph{Bernoulli convolution} $\nu_\lam$ is the distribution of the random sum $\sum_{n=1}^\infty \pm \lam^n$, where the signs are chosen with equal probability. The study of this family has a long history, dating back to Erd\H{o}s' seminal papers \cite{Erdos39, Erdos40}. The most important problem around Bernoulli convolutions is to determine for which values of $\lam$ it is absolutely continuous (and when it is, find out what can be said about its density). The only known values for which $\nu_\lam$ is singular are reciprocals of Pisot numbers in $(1,2)$. In the opposite direction, recently we have shown that, outside of a set of $\lambda$ of zero Hausdorff dimension, $\nu_\lam$ is absolutely continuous, and has a density in $L^q$ for some $q=q(\lam)>1$. See \cite{Shmerkin13, ShmerkinSolomyak14} for the proofs, and further background and references on Bernoulli convolutions.

Bernoulli convolutions have an immediate generalization to the complex plane: if $\lambda$ is now a non-zero complex number in the open unit disk $\mathbb{D}$, then we can still define a measure $\nu_\lam$ as the distribution of the random sum $\pm\lam^n$, with the signs chosen independently with equal probability. Denote the support of $\nu_\lam$ by $A_\lam$; this is a compact set satisfying the self-similarity relation $A_\lam = (\lam A_\lam-1)\cup (\lam A_\lam+1)$. When $|\lambda|<1/\sqrt{2}$, one has $\dim_H(A_\lam)\le \frac{\log 2}{|\log\lam|}<2$, and hence $\nu_\lam$ is necessarily singular. We remark that it is far from clear how to determine the measure and topology of $A_\lam$ when $|\lam|>1/\sqrt{2}$. This is in contrast to real Bernoulli convolutions: for $\lam\in (1/2,1)$, it is an easy fact that the support of $\nu_\lam$ is an interval.

Most of the previous work on this family concerns the structure of its \emph{connectedness locus} (sometimes also called the \emph{Mandelbrot set for pairs of linear maps}) $\mathcal{M}=\{\lam\in \D: A_\lam \text{ is connected}\}$. The study of this set (and of the family $A_\lam$) was pioneered by  Barnsley and Harrington \cite{BarnsleyHarrington85}. Bandt \cite{Bandt02} conjectured that, away from the real axis, $\mathcal{M}$ is the closure of its interior; this was recently proved by Calegari, Koch and Walker \cite{CKW14}. See \cite{CKW14} and references there for more on the properties of $\mathcal{M}$.

In this article, motivated by the study of real Bernoulli convolutions, we investigate a different problem: what can we say about the set  $\{ \lam\in \mathbb{D}:|\lam|\in (2^{-1/2},1),\nu_\lam \text{ is singular}\}$? Away from this set, what can we say about the density of $\nu_\lam$? Note that answers to these questions have immediate consequences for $A_\lambda$. This problem was first addressed in \cite{SolomyakXu03} (although similar problems for the sets $A_\lam$ were investigated earlier in \cite{PeresSchlag00}), where the following is proved:

\begin{thm}[{\cite[Theorem 2.10]{SolomyakXu03}}]
Let
\begin{align*}
U_1 &= \{ \lam\in\CmR: 2^{-1/2} < |\lam| < 2\cdot 5^{-5/8}\},\\
U_2 &= \{ \lam\in\CmR: \lambda^k\in U_1 \text{ for some } k\ge 2\}.
\end{align*}
Then $\nu_\lam$ is absolutely continuous with an $L^2$ density for almost all $\lam\in U_1$, and $\nu_\lam$ is absolutely continuous with a continuous density for almost all $\lam\in U_2$.

In particular, $A_\lam$ has positive Lebesgue measure for almost all $\lam\in U_1$ and has nonempty interior for almost all $\lam\in U_2$.
\end{thm}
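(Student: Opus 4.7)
The plan is to adapt Solomyak's Fourier-analytic transversality method from the real line to the complex plane.

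\emph{Step 1 (transversality).} I would first establish that for every power series $g(\lam) = 1 + \sum_{n\geq 1} a_n \lam^n$ with $a_n\in\{-1,0,1\}$, either $|g(\lam)|$ or $|g'(\lam)|$ is bounded below by a uniform positive constant on the disc $\{|\lam|<\rho\}$, where $\rho=2\cdot 5^{-5/8}$. The threshold $2\cdot 5^{-5/8}$ is the modulus of the smallest zero that any such series can have, realized by an explicit extremal series. The argument would split $g$ into a short initial polynomial and a tail bounded by a geometric series: the polynomial is analyzed by direct inspection of the extremal configurations, while the tail is made uniformly small inside the disc by choosing its starting index large.

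\emph{Step 2 (Fourier decay).} Next I would convert transversality into Fourier decay along the parameter. Writing $|\widehat{\nu_\lam}(\xi)|^2 = \mathbb{E}[\exp(2\pi i \langle f_\omega(\lam),\xi\rangle)]$ with $f_\omega(\lam)=\sum_{n\geq 1}a_n\lam^n$ and $a_n\in\{-2,0,2\}$ i.i.d., I would exchange the expectation with the $\lam$-integral, stratify $\omega$ by its first nonzero index $n_0$, factor out $\lam^{n_0}$, and apply Step~1 to the resulting normalized series. A standard oscillatory-integral estimate then gives
$$\int_{U_1}|\widehat{\nu_\lam}(\xi)|^2\, dm(\lam) \;\leq\; C\,|\xi|^{-\gamma}$$
for some $\gamma>2$, where $m$ is planar Lebesgue measure. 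Fubini together with integration over $\xi\in\R^2$ then yields $\widehat{\nu_\lam}\in L^2(\R^2)$, and hence $\nu_\lam\in L^2$, for $m$-a.e.\ $\lam\in U_1$; positivity of $\leb(A_\lam)$ is immediate.

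\emph{Step 3 (continuous density on $U_2$).} For $\lam$ with $\lam^k\in U_1$, decomposing the random sum $\sum_{n\geq 1}\eps_n\lam^n$ by residue class mod $k$ produces a factorization
$$\widehat{\nu_\lam}(\xi) \;=\; \prod_{j=0}^{k-1} \widehat{\nu_{\lam^k}^{\#}}(c_j(\lam)\,\xi),$$
where $\nu_{\lam^k}^{\#}$ is a rescaling of $\nu_{\lam^k}$ (and in particular in the same $L^2$-class) and $|c_j(\lam)|=|\lam|^{j+1}$. Keeping two factors and bounding the remaining $k-2$ by $1$, the Cauchy--Schwarz inequality gives
$$\int_{\R^2}|\widehat{\nu_\lam}(\xi)|\,d\xi \;\leq\; |\lam|^{-1}\,\|\widehat{\nu_{\lam^k}}\|_{L^2(\R^2)}^{2} \;<\;\infty$$
for $m$-a.e.\ $\lam\in U_2$, using Step~2 at $\lam^k$ together with the absolute continuity of $\lam\mapsto\lam^k$ (which pulls null sets back to null sets). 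Therefore $\widehat{\nu_\lam}\in L^1(\R^2)$, so $\nu_\lam$ has a continuous density and $A_\lam$ has nonempty interior.

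The main obstacle is Step~2: the decay exponent $\gamma$ must exceed the ambient dimension $2$. In the real case one needs only $\gamma>1$, which standard transversality delivers, but here the planar $\xi$-integration demands a full extra order of oscillatory cancellation; the numerical threshold $2\cdot 5^{-5/8}$ is precisely what ensures that the transversality constant from Step~1 is large enough to push $\gamma$ past $2$.
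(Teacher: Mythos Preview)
This theorem is not proved in the present paper; it is quoted from \cite{SolomyakXu03} as background for the main results (Theorems~A--D), which are established by an entirely different route (Hochman's inverse theorem combined with an Erd\H{o}s--Kahane argument). There is therefore no proof here against which to compare your proposal.

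That said, your outline does follow the transversality strategy of \cite{SolomyakXu03}, and Steps~1 and~3 are sound in shape. Step~2, however, has a gap, and your closing paragraph misdiagnoses it. First-order transversality yields only the borderline exponent $\gamma=d$ (so $\gamma=1$ on the line and $\gamma=2$ in the plane), not $\gamma>d$: after stratifying by the first-difference index $n_0$ and summing $\sum_{n_0}2^{-n_0}\min\bigl(1,(|\xi|\,|\lam|^{n_0})^{-d}\bigr)$, the small-$n_0$ contribution is exactly of order $|\xi|^{-d}$, which is not integrable over $\R^d$. Your claim that ``standard transversality delivers $\gamma>1$'' in the real case is already incorrect, and the number $2\cdot 5^{-5/8}$ plays no role in the decay exponent --- it is the modulus of the smallest \emph{double} zero of a $\{-1,0,1\}$ power series, i.e.\ the radius up to which transversality holds at all. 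The way \cite{SolomyakXu03} (as in \cite{Solomyak95} on the line) resolves the borderline is to bypass the $\xi$-integral and bound instead the correlation functional $r^{-2}\iint\mathbf{1}_{|x-y|<r}\,d\nu_\lam(x)\,d\nu_\lam(y)$, integrated over $\lam\in U_1$, uniformly in $r$: the holomorphic level-set estimate $m\{\lam\in U_1:|g(\lam)|<\rho\}\le C\rho^{2}$ combines with the supercriticality $2|\lam|^{2}>1$ to make the resulting geometric series converge, and Fatou's lemma then gives $\nu_\lam\in L^2$ for almost every $\lam$.
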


Based on the work of Peres and Schlag \cite{PeresSchlag00}, the paper \cite{SolomyakXu03} also gave bounds on the Hausdorff dimension on the sets $\{ \lam\in U'_1: \,\nu_\lam\notin L^2\}$ for compact sub-regions $U'_1\subset U_1$; these bounds are much larger than zero. (Here, and below, by $\mu\in\mathcal{F}$, where $\mathcal{F}$ is a function space, we mean that $\mu$ is absolutely continuous, and its Radon-Nikodym derivative is in $\mathcal{F}$.) We emphasize that the region $U_1\cup U_2$ is far from covering all of the supercritical parameter region
\[
U:= \{ \lambda\in\C\setminus\R: |\lambda|\in (2^{-1/2},1) \},
\]
so even almost sure type of results were lacking here (by contrast, it has been known since \cite{Solomyak95} that real Bernoulli convolutions are absolutely continuous for almost all $\lam\in (1/2,1)$). The following is our first main result.

\begin{mainthm} \label{mainthm:complex-BC}
There exists a set $E\subset U$ with $\dim_H E=0$, such that $\nu_\lam$ is absolutely continuous and has a density in $L^q$ for some $q=q(\lambda)>1$. In particular, $A_\lam$ has positive Lebesgue measure for all $\lam\in U\setminus E$.
\end{mainthm}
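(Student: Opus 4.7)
The plan is to transpose to the complex setting the two-step strategy developed for real Bernoulli convolutions in \cite{Shmerkin13, ShmerkinSolomyak14}. First, I would establish $\dim_H \nu_\lam = 2$ outside a zero Hausdorff dimension subset $E_1 \subset U$, via Hochman's inverse theorem for self-similar measures. Second, I would upgrade dimensional maximality to an $L^q$ density using a convolution-smoothing argument fed by polynomial Fourier decay estimates on truncated-sum measures, at the cost of enlarging the exceptional set by a further zero-dim piece $E_2$.

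For the first step, note that for $|\lam|>2^{-1/2}$ the similarity dimension $\log 2/|\log|\lam||$ of the IFS $\{z \mapsto \lam z \pm 1\}$ exceeds $2$. Viewing $\nu_\lam$ as a self-similar measure on $\R^2$, and noting that the similitudes depend analytically and non-trivially on $\lam$, Hochman's inverse theorem should produce $E_1 \subset U$ with $\dim_H E_1 = 0$ such that $\dim_H \nu_\lam = 2$ for every $\lam \in U \setminus E_1$.

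For the second step, self-similarity yields, for each $k \geq 1$, the factorization
\[
\nu_\lam \,=\, \eta_\lam^{(k)} * (T_{\lam^k})_* \nu_\lam,
\]
where $\eta_\lam^{(k)}$ is the law of the truncated sum $\sum_{n=0}^{k-1} \pm \lam^n$ and $T_r$ denotes scaling by $r$. The abstract tool I wish to invoke is an $L^q$-flattening/smoothing principle on $\R^2$: a self-similar measure of full Hausdorff dimension, convolved with a measure of polynomial Fourier decay, produces an $L^q$ density for some $q>1$ (cf.\ Shmerkin's extension of the Peres-Schlag framework). For $\lam \in U \setminus E_1$ the first factor has Hausdorff dimension $2$, so it remains to exhibit polynomial Fourier decay of $\eta_\lam^{(k)}$ as $k \to \infty$ outside a zero-dim set of parameters. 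This would be achieved by a complex-parameter Erd\H{o}s-Kahane argument: for any $s > 0$, the set of $\lam$ for which the Fourier transforms of the truncated-sum measures fail to decay like a power of $|\xi|$ can be covered by roughly $N^s$ disks of radius $\sim 2^{-N}$, giving Hausdorff dimension at most $s$. Intersecting over a sequence $s_j \to 0$ produces $E_2$, and the conclusion of the theorem then holds on $U \setminus (E_1 \cup E_2)$.

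The principal difficulty I expect lies in the second step: executing the Erd\H{o}s-Kahane/transversality argument uniformly in the two-dimensional parameter $\lam = re^{i\theta}$. One must track both the magnitude and the phase of the partial sums $\sum_{n=0}^{N-1} \pm \lam^n$ simultaneously, and extend Peres-Schlag-type transversality estimates to $\C$-valued power series; the rotational degree of freedom in $\lam$ makes the combinatorics of ``bad'' digit sequences substantially more delicate than in the real case. A secondary task is to set up the $L^q$-flattening theorem on $\R^2$ along the lines of Shmerkin's one-dimensional framework, but this should be a reasonably direct extension once the Fourier-decay input is in place.
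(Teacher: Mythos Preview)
Your overall two-step strategy (a full-dimension factor via Hochman, plus a Fourier-decaying factor via an Erd\H{o}s--Kahane argument, glued together by an $L^q$ convolution-smoothing lemma) is exactly the paper's. However, the concrete decomposition you propose cannot feed that lemma, and the Hochman step needs more input than you indicate.

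\textbf{The decomposition.} The measure $\eta_\lam^{(k)}$ you define is the law of the \emph{finite} sum $\sum_{n=0}^{k-1}\pm\lam^n$; it is purely atomic (a sum of at most $2^k$ Dirac masses), so its Fourier transform is a trigonometric polynomial and does not decay at infinity. No Erd\H{o}s--Kahane argument can produce power decay for it, and the $L^q$-flattening lemma genuinely needs $|\widehat{\eta}(\xi)|\le C|\xi|^{-\gamma}$ for all large $\xi$. The decomposition the paper uses instead splits along residue classes modulo $k$:
\[
\nu_\lam \;=\; \mu_\lam * \eta_\lam,\qquad
\mu_\lam=\mathrm{law}\Bigl(\sum_{k\nmid n}\pm\lam^n\Bigr),\quad
\eta_\lam=\mathrm{law}\Bigl(\sum_{k\mid n}\pm\lam^n\Bigr)=\nu_{\lam^k}.
\]
Both factors are now infinite convolutions (self-similar measures). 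The factor $\mu_\lam$ has similarity dimension $(1-\tfrac1k)\,\log 2/|\log\lam|>2$ for $k$ chosen suitably, so Hochman's theorem applied to \emph{this} factor gives $\dim\mu_\lam=2$; the factor $\eta_\lam=\nu_{\lam^k}$ is again a complex Bernoulli convolution, and the complex Erd\H{o}s--Kahane argument (Theorem~\ref{mainthm:E-K}(i) in the paper) yields power Fourier decay for it outside a zero-dimensional set of $\lam$. Your ``truncation plus scaled tail'' split puts all the infinite structure into a single factor and leaves nothing for the other role.

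\textbf{The Hochman step.} You assert that Hochman's inverse theorem directly gives a zero-dimensional exceptional set $E_1$. The general result for parametrized families \cite[Theorem~1.10]{Hochman15} only bounds the exceptional set by the parameter-space dimension minus one; here the parameter $\lam$ is complex, so this would give $\dim_H E_1\le 1$, not $0$. Getting down to zero requires an argument specific to this family (Proposition~\ref{prop:zero-dim-scc} in the paper): the cylinder-separation functions are polynomials in $\lam$ of degree $\le n$ with coefficients in the fixed finite set $\{-2,0,2\}$, and a normal-families/zero-counting lemma for such power series controls the covering numbers of the super-exponential-concentration locus. This polynomial structure is the mechanism that beats the generic $\ell-1$ bound, and it needs to be invoked explicitly.
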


\begin{rems}
\begin{enumerate}
\item We do not know if the density is in $L^2$ (or even in $L^q$ for some fixed $q>1$) outside of a zero dimensional set of exceptions. We do get some new information for $|\lam|$ very close to $1$ and away from the real axis; see Theorem \ref{mainthm:E-K}(iii) below.
\item Certainly one cannot hope to get absolute continuity for \emph{all} $\lam\in\mathbb{D}\setminus \R$ such that $s(\lam,p)>2$: it is shown in \cite[Theorem 2.3]{SolomyakXu03} that $\nu_\lam$ is singular whenever $\lam^{-1}$ is a so-called complex Pisot number, that is, $\lam^{-1}$ is algebraic and all of its algebraic conjugates, other than its complex conjugate, lie in $\D$. There are infinitely many complex Pisot numbers $\lam$ with $\lam^{-1}\in U$.
\item Recently Hare and Sidorov \cite{HS} proved that $A_\lam$ has nonempty interior {\em for all} nonreal $\lam$ with $|\lam| \in (2^{-1/4},1)$. However, their results do not say anything about $\nu_\lam$.
\end{enumerate}
\end{rems}

In fact, we obtain a more general result than Theorem \ref{mainthm:complex-BC}. Firstly, our results also hold in the biased case, that is, when the signs $\pm$ are chosen with different probabilities $p, 1-p$; moreover, the exceptional set is independent of the bias, modulo the fact that the supercritical parameter region changes. Secondly, we obtain an analogous result for an arbitrary choice of translation vectors.

In order to state the result, we introduce some notation. For a finite index set $\Lambda$, let $\PP_\Lambda$ be the open simplex of probability vectors $p=(p_i)_{i\in\Lambda}$ with $p_i>0$. Given $p\in\PP_\Lambda$, $\mathbf{a}=(a_i)_{i\in\Lambda}\in\C^\Lambda$, and $\lam\in\D$, let $\nu_{\lam,\mathbf{a}}^p$ be the self-similar measure corresponding to the IFS  $(f_i: z\mapsto \lam z+a_i)_{i\in\Lambda}$ with weights $(p_i)_{i\in\Lambda}$, that is, the only Borel probability measure satisfying the relation
\[
\nu_{\lam,\mathbf{a}}^p = \sum_{i\in\Lambda} p_i \, f_i\nu_{\lam,\mathbf{a}}^p.
\]
Here and below, if $\mu$ is a measure on $X$ and $g:X\to Y$ is a map, then $g\mu(A)=\mu(g^{-1}A)$ is the push-forward measure. Further, let $s(\lam,p)$ be the similarity dimension of this measure; explicitly
\[
s(\lam,p) = \frac{h(p)}{-\log(\lam)},
\]
where $h(p)=-\sum_{i\in\Lambda} p_i \log(p_i)$ is the entropy of $p$. Recall that the (lower) Hausdorff dimension of a measure $\mu$ is
\[
\dim\mu = \inf\{\dim_H A: \mu(A)>0\}.
\]
It is well known that $\dim\nu_{\lam,\mathbf{a}}^p\le s(\lam,p)$, and in particular $\nu_{\lam,\mathbf{a}}^p$ is singular whenever $s(\lam,p)<2$ (note that the translations do not come up in $s(\lam,p)$). Further, let $A_{\lam,\mathbf{a}}$ be the attractor of $( z\mapsto \lam z+a_i)_{i\in\Lambda}$ or, alternatively, the topological support of $\nu_{\lam,\mathbf{a}}^p$.

\begin{mainthm} \label{mainthm:complex-SSM}
Fix $\mathbf{a}=(a_i)_{i\in\Lambda}$, where $m=\#\Lambda\ge 2$ and all the $a_i$ different. Then there is a set $E\subset \mathbb{D}\setminus\R$ of zero Hausdorff dimension, such that for any $\lam\in\mathbb{D}\setminus (E\cup \R)$ and any $p\in\PP_\Lambda$ such that $s(\lam,a,p)>2$, the measure $\nu_{\lam,\mathbf{a}}^p$ is absolutely continuous, and has a density in $L^q$ for some $q=q(\lam)>1$.

In particular, if $\lam\in \D\setminus (E\cup\R)$ and $\lam^2 m>2$, then $A_{\lam,\mathbf{a}}$ has positive Lebesgue measure.
\end{mainthm}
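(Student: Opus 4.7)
The plan is to follow the two-step scheme used by the authors in their treatment of real Bernoulli convolutions: first establish a dimension result for $\nu^p_{\lam,\mathbf{a}}$ outside a zero-dimensional exceptional set of parameters $\lam$, and then upgrade \emph{full dimension} to \emph{absolute continuity with an $L^q$ density} by means of an inverse theorem for $L^q$-norms of convolutions.

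\textbf{Step 1 (Dimension).} First, I would show that there is a set $E_0\subset\D\setminus\R$ with $\dim_H E_0=0$, depending only on $\mathbf{a}$, such that
\[
\dim \nu^p_{\lam,\mathbf{a}} \;=\; \min(s(\lam,p),\,2) \qquad \text{for every } \lam\in\D\setminus(\R\cup E_0) \text{ and every } p\in\PP_\Lambda.
\]
This is an analogue in $\C\cong\R^2$ of Hochman's theorem on the dimension of self-similar measures. The set $E_0$ would be described by a Diophantine-type condition: $\lam\in E_0$ precisely when the translation parts of the $n$-fold compositions,
\[
t_I(\lam)\,:=\,\sum_{j=0}^{n-1}\lam^j\,a_{i_j}, \qquad I=(i_0,\ldots,i_{n-1})\in\Lambda^n,
\]
admit pairs whose differences decay super-exponentially in $n$. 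Since this condition depends only on $\lam$ and $\mathbf{a}$ (not on $p$), the same $E_0$ works uniformly in $p\in\PP_\Lambda$. Because $\lam\notin\R$, the IFS contains a genuine rotation, and the Hochman-type machinery, adapted to $\R^2$, yields $\dim_H E_0=0$. Under the hypothesis $s(\lam,p)>2$ and $\lam\notin E_0$, this forces $\dim\nu^p_{\lam,\mathbf{a}}=2$.

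\textbf{Step 2 (Upgrade to $L^q$).} To pass from full dimension to an $L^q$ density, I would exploit the iterated self-similarity
\[
\nu^p_{\lam,\mathbf{a}} \;=\; \eta^p_n \,*\, M_{\lam^n}\nu^p_{\lam,\mathbf{a}},
\]
where $M_w\colon z\mapsto wz$ and $\eta^p_n=\sum_{I\in\Lambda^n}(p_{i_0}\cdots p_{i_{n-1}})\,\delta_{t_I(\lam)}$. Suppose for contradiction that $\nu^p_{\lam,\mathbf{a}}\notin L^q(\C)$ for any $q>1$; then the $L^q$-dimension of $\nu^p_{\lam,\mathbf{a}}$ is strictly less than $2$ for every $q>1$. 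Applying an $\R^2$ version of Shmerkin's inverse theorem for $L^q$-norms of convolutions to the above decomposition would force, along a sequence of scales $|\lam|^n$, a branching structure in $\eta^p_n$ compatible only with measures of $L^q$-dimension strictly less than $2$. Since $\eta^p_n$ is itself a level-$n$ discretization of $\nu^p_{\lam,\mathbf{a}}$, feeding this back into the dimension conclusion of Step~1 yields a contradiction for all $\lam$ outside a further zero-dimensional set $E_1$. Setting $E=E_0\cup E_1$ proves the theorem; the final ``in particular'' statement follows from the special case $p=(1/m,\ldots,1/m)$, for which $s(\lam,p)=\log m\,/\,|\log|\lam||$.

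\textbf{Main obstacles.} The principal technical hurdles are (i) formulating and proving a Hochman-type dimension theorem in $\R^2$ in the presence of genuine rotations, with an exceptional set that is both zero-dimensional \emph{and} independent of the weight vector $p$, and (ii) carrying out the $L^q$ inverse theorem argument in $\R^2$, where the rotation factor $\arg(\lam^n)$ must be tracked across the multi-scale branching analysis. A subsidiary difficulty, not present in the real case, is that the topology and measure of $A_{\lam,\mathbf{a}}$ in the supercritical regime are largely unknown, so the argument must be conducted entirely at the level of measures, with no geometric input from the structure of the support.
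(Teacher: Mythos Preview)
Your Step~1 is correct and matches the paper: Hochman's theorem in $\R^2$ (no dimension drop absent super-exponential concentration of cylinders, provided $\lam\notin\R$) combined with the fact that super-exponential concentration occurs only on a set of $\lam$ of packing dimension zero (proved in the paper via a zero-counting argument for power series with restricted coefficients) yields exactly the set $E_0$ you describe, uniformly in $p$.

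Step~2 is where the gap lies, and the paper proceeds quite differently. It does \emph{not} invoke an $L^q$ inverse theorem. Instead, for a suitable integer $k$ it writes $\nu^p_{\lam,\mathbf{a}}=\mu*\eta$ by splitting the random sum $\sum_n\lam^n X_n$ into the terms with $k\nmid n$ (giving $\mu$) and $k\mid n$ (giving $\eta=\nu^p_{\lam^k,\mathbf{a}}$), with $k$ chosen so that $\mu$ still has similarity dimension exceeding $2$. Step~1 then gives $\dim\mu=2$, and the decisive second ingredient---entirely absent from your proposal---is a complex Erd\H{o}s--Kahane argument (Theorem~\ref{mainthm:E-K}) establishing power Fourier decay $|\widehat\eta(\xi)|\le C|\xi|^{-\gamma}$ for $\lam$ outside a second zero-dimensional set. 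Absolute continuity with an $L^q$ density then follows from the elementary fact that the convolution of a full-dimensional measure with a measure having power Fourier decay lies in some $L^q$, $q>1$. Your proposed substitute does not work as written: the relation $\nu=\eta^p_n*M_{\lam^n}\nu$ carries $\nu$ on both sides, so an inverse theorem applied to it cannot manufacture information beyond what you already possess about $\nu$; knowing $\dim\nu=2$ in the Hausdorff sense does not give $L^q$-dimension $2$, and that is precisely the gap that must be closed. Moreover, an $\R^2$ inverse theorem handling genuine rotations is not available in the form you need, and you supply no mechanism producing the second exceptional set $E_1$---in the paper's argument, $E_1$ is concretely the Erd\H{o}s--Kahane exceptional set where Fourier decay fails.
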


We make some remarks on this statement.
\begin{rems}
\begin{enumerate}
\item Note that $\nu_\lam = \nu_{\lam,(-1,1)}^{(1/2,1/2)}$, and $s(\lam,(\tfrac12,\tfrac12))=\log 2/|\log\lam|$. Hence Theorem \ref{mainthm:complex-BC} is immediate from Theorem \ref{mainthm:complex-SSM}.
\item This is a direct analog of the results on real self-similar measures from \cite{Shmerkin13, ShmerkinSolomyak14}, and we use the same scheme of proof. We note however that in the complex case the family $\{\nu_{\lam,\mathbf{a}}^p\}_{\lam\in\D}$ is two-dimensional, and we still get a zero-dimensional set of exceptions (by contrast, the bound on the dimension of the exceptional set in e.g. \cite[Theorem A]{ShmerkinSolomyak14} or \cite[Theorem 1.10]{Hochman15} is equal to the dimension of the parameter space minus one).
\item When $|\Lambda|=2$, the case $\lam\in\R$ reduces back to the family of real Bernoulli convolutions. However, if $|\Lambda|\ge 3$ and the vectors $a_i$ are not collinear, then the exclusion of $\lam\in\R$ may appear artificial. The main reason behind this is that, because we are interested in the regime where we expect the measure to be absolutely continuous, we cannot hope to rule out the third alternative (saturation along lines) in \cite[Theorem 1.5]{Hochman15}, and this causes our  current proof to break down (however, see Section \ref{subsec:gasket} for a special case where we do get information for real $\lam$).
\end{enumerate}
\end{rems}

Our method applies equally well to other parametrized families of self-similar measures on the complex plane. For concreteness, we focus on one example, concerning arithmetic sums of similar copies of $C_\lambda$. Recall that given two sets $A,B\subset\R^d$, their arithmetic sum is $A+B=\{x+y:x\in A, y\in B\}$. The problem of estimating the dimension, measure and topology of arithmetic sums of Cantor sets has received a great deal attention in the last decades, motivated in part by conjectures of Furstenberg about sums of Cantor sets with certain arithmetic structure (see \cite{PeresShmerkin09, HochmanShmerkin12}) and of Palis regarding sums of Cantor sets arising in smooth dynamics, see \cite{MoreiraYoccoz01}. The vast majority of results, however, are about sums of subsets of $\R$.

The analog of arithmetic sums for measures is convolution. Recall that the convolution of two finite measures $\mu,\nu$ on $\R^d$ is the push-down of the product $\mu\times\nu$ under the map $(x,y)\mapsto x+y$. The Lebesgue measure in $\R^d$ is denoted $\mathcal{L}$.
We have the following result:

\begin{mainthm} \label{mainthm:convolutions}
Suppose that $\mathbf{a}=(a_i)_{i\in\Lambda}$ and $\lam\in\D\setminus \R$ are such that $(\lam z+ a_i)_{i\in\Lambda}$ satisfies the open set condition. There is a set $E\subset\C$ of zero Hausdorff dimension such that the following holds: for every $p\in\PP_\Lambda$ such that $s(\lam,p)>1$ and every $u\in\C\setminus E$,
\[
\nu_{\lam,\mathbf{a}}^p * S_u \nu_{\lam,\mathbf{a}}^p \ll \mathcal{L},
\]
where $S_u(z)=u z$, and moreover the density is in $L^q$ for some $q=q(u)>1$. In particular, if $\lam>1/\#\Lambda$, then $\mathcal{L}(A_{\lam,\mathbf{a}}+u A_{\lam,\mathbf{a}})>0$ for all $u\in\C\setminus E$.
\end{mainthm}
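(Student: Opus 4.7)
The plan is to realize the convolution $\eta_u := \nu_{\lam,\bba}^p * S_u \nu_{\lam,\bba}^p$ as itself a self-similar measure on $\C$ whose translations vary affinely in the complex parameter $u$, and then to apply the same scheme that drives Theorem~\ref{mainthm:complex-SSM}, but with $\lam\in\D\setminus\R$ held fixed and $u$ playing the role of the parameter.

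First I would show that $\eta_u$ is self-similar. Writing $\mu=\nu_{\lam,\bba}^p$ and letting $X,Y\sim\mu$ be independent, the self-similar relation for $\mu$ gives $X\stackrel{d}{=}\lam X'+a_I$ and $Y\stackrel{d}{=}\lam Y'+a_J$ with $(I,J)\sim p\otimes p$ independent of $(X',Y')$, so
\begin{equation*}
X+uY \stackrel{d}{=} \lam(X'+uY')+ (a_I + u\,a_J).
\end{equation*}
Hence $\eta_u$ is the unique Borel probability on $\C$ satisfying
\begin{equation*}
\eta_u = \sum_{(i,j)\in\Lambda\times\Lambda} p_ip_j\, h_{i,j}(\eta_u),\qquad h_{i,j}(z)=\lam z + (a_i + u\,a_j),
\end{equation*}
i.e.\ $\eta_u=\nu_{\lam,\tilde\bba(u)}^{p\otimes p}$ with index set $\Lambda\times\Lambda$ and translations $\tilde a_{(i,j)}(u)=a_i+u\,a_j$ depending affinely on $u$. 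Its similarity dimension is $h(p\otimes p)/|\log\lam|=2s(\lam,p)>2$ by hypothesis, which is the supercritical condition needed for absolute continuity.

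Next I would import the parametric mechanism behind Theorem~\ref{mainthm:complex-SSM}: that mechanism combines a Hochman-type dimension theorem with the $L^q$-upgrade of \cite{Shmerkin13,ShmerkinSolomyak14}, and the exponential-separation hypothesis is verified off a zero-dimensional set of parameters by bounding the zero sets of cylinder-point difference polynomials. For the family $(h_{i,j})$ the relevant objects are, for distinct words $\mathbf{w}=(i_1,j_1)\cdots(i_n,j_n)$ and $\mathbf{w}'$,
\begin{equation*}
\Delta_{\mathbf{w},\mathbf{w}'}(u) = \sum_{k=1}^{n}\lam^{k-1}\bigl[(a_{i_k}-a_{i'_k}) + u\,(a_{j_k}-a_{j'_k})\bigr] = P(\lam) + u\,Q(\lam),
\end{equation*}
which are \emph{affine} in $u$ with fixed coefficients (since $\lam$ is fixed). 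Each non-trivial difference vanishes at a single point of $\C$; the quantitative $u$-analog of the Hochman resonance bound then produces a zero-dimensional exceptional set $E\subset\C$ outside of which $(h_{i,j})$ has exponential separation and thus $\dim\eta_u=2$. The $L^q$-upgrade yields $\eta_u\ll\leb$ with density in $L^{q(u)}$ for some $q(u)>1$; the final statement about $\leb(A_{\lam,\bba}+uA_{\lam,\bba})>0$ is then immediate, since $A_{\lam,\bba}+uA_{\lam,\bba}$ is the topological support of $\eta_u$ (with uniform weights one has $s(\lam,p)>1$ iff $|\lam|>1/\#\Lambda$).

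The main obstacle will be porting the dimension-of-exceptions framework from the $\lam$-parametrization to the $u$-parametrization: the parameter space is now two real-dimensional, the translations depend only affinely (not polynomially) on $u$, and $\lam$ is fixed and non-real. The hard part will be checking that the coefficient polynomials $P(\lam),Q(\lam)$ do not both vanish too often as $\mathbf{w},\mathbf{w}'$ vary, which in turn requires that $\lam\notin\R$, that the $a_i$ are distinct, and (crucially) that the OSC on the original IFS prevents non-generic algebraic coincidences between words in the product IFS.
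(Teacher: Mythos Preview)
Your realization of $\eta_u=\nu_{\lam,\bba}^p*S_u\nu_{\lam,\bba}^p$ as the self-similar measure for the product IFS $(\lam z+a_i+ua_j)_{(i,j)\in\Lambda^2}$, and your analysis of the affine-in-$u$ cylinder differences $\Delta_{\mathbf{w},\mathbf{w}'}(u)=P(\lam)+uQ(\lam)$, are correct and match the paper's Proposition~\ref{prop:zero-dim-scc-conv}. The open set condition enters exactly where you say: it forces $|Q(\lam)|\ge c^n$ for every nonzero $Q$ of degree $<n$, so the level-$n$ exceptional set is covered by $O((\#\Lambda)^{4n})$ balls of radius $(\eps/c)^n$, and one concludes $\dim\eta_u=2$ off a zero-dimensional set via Hochman's theorem.

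The gap is the jump from $\dim\eta_u=2$ to $\eta_u\in L^q$. The ``$L^q$-upgrade'' of \cite{Shmerkin13,ShmerkinSolomyak14} is not a black box converting full dimension into absolute continuity: it requires a convolution factorization $\eta_u=\mu'_u*\eta'_u$ where $\dim\mu'_u=2$ \emph{and} $\widehat{\eta'_u}$ has power decay at infinity. You have addressed only the first half. For the second half one needs an Erd\H{o}s--Kahane argument in which the \emph{translation} parameter $u$ varies while $\lam$ is fixed; this is not covered by the $\lam$-parametrized version driving Theorem~\ref{mainthm:complex-SSM}, and is a separate, nontrivial result (Theorem~\ref{mainthm:E-K}(ii) in the paper, proved via Proposition~\ref{prop:EK-combinatorial2}). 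Concretely, the paper uses the $k$-step decomposition $\nu=\mu*\eta$ to write $\eta_u=(\mu*S_u\mu)*(\eta*S_u\eta)$; your argument gives $\dim(\mu*S_u\mu)=2$, while $\eta*S_u\eta=\nu_{\lam^k,\mathbf{b}(u)}^{p\otimes p}$ with $b_{ij}(u)=a_i+ua_j$ has at least three distinct translations and gets power Fourier decay from Theorem~\ref{mainthm:E-K}(ii), off a second zero-dimensional set of $u$. Without supplying this Fourier-decay ingredient, the sketch does not close.
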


\begin{rems}
\begin{enumerate}
\item It will emerge from the proof that the open set condition can be weakened substantially, to no super-exponential concentration of cylinders (see Section \ref{subsec:hochman}). It is clear that some assumption is needed, though, as if there was a dimension drop already for  $\nu_{\lam,\mathbf{a}}^p$, it would carry over to the convolution.
\item In general, the claim does not hold for all $u$, and not even for $u=1$, since $A_\lambda+A_\lambda$ is the attractor of $(\lambda z,\lambda z+1,\lambda z+2)$ which has only three maps (there is an exact overlap), so $\dim_H A_\lambda>1$ but $\dim_H(A_\lam+A_\lam)<2$ for $|\lambda|\in (1/2,1/\sqrt{3})$.
\end{enumerate}
\end{rems}

Following the ideas from \cite{Shmerkin13, ShmerkinSolomyak14}, the strategy to prove Theorems \ref{mainthm:complex-SSM} and \ref{mainthm:convolutions} is to decompose the measures in question as a convolution of two measures, the first of which has full dimension outside of a zero dimensional set of exceptional parameters (this relies on deep recent results of Hochman), while the second measure has power Fourier decay (again outside of a small parameter set). This
power decay is achieved  by adapting what has come to be known as the Erd\H{o}s-Kahane argument. As a direct consequence, we obtain that for $\lam$ close enough to $1$ in modulus, $\nu_{\lam,\mathbf{a}}^p$ has a $C^k$ density outside of a set of arbitrarily small dimension -- this is what Erd\H{o}s and Kahane proved in the real case.

\begin{mainthm} \label{mainthm:E-K}
\begin{enumerate}
\item[\rm{(i)}] There is a set $E\subset\D\setminus\R$ of zero Hausdorff dimension, such that if $\mathbf{a}=(a_i)_{i\in\Lambda}$ with  not all of the $a_i$ equal, $p\in\mathbb{P}_\Lambda$, and $\lam\in\D\setminus (E\cup\R)$, then there are $C,\gamma>0$ (depending on $\lam,\mathbf{a},p$), such that
    \be \label{eq:power-decay}
    |\widehat{\nu_{\lam,\mathbf{a}}^p}(\xi)| \le C\, |\xi|^{-\gamma} \quad\text{for all }\xi\in\C\setminus\{0\}.
    \ee
    More precisely, fix $\e>0$ and a region $H=H_{b_1,b_2,\eta} := \{ z\in\C: b_1\le |z|\le b_2, \Im(z)>\eta\}$ with $1<b_1<b_2$ and $\eta>0$. Then there exist $\gamma,C>0$ and a set $\mathcal{E}\subset H $ with $\dim_H(\mathcal{E})<\e$, such that \eqref{eq:power-decay} holds for all $\lam$ such that $\lam^{-1}\in H \setminus \mathcal{E}$.
\item[\rm{(ii)}] Given $\lam\in\C\setminus\R$,  there is a set $E\subset\C$ of zero Hausdorff dimension, such that whenever $\#\Lambda\ge 3$, $p\in\mathbb{P}_\Lambda$ and $\mathbf{a}=(a_i)_{i\in\Lambda}$ satisfies $(a_k-a_i)/(a_j-a_i)\notin E$ for some distinct $i,j,k\in\Lambda$, there are $C,\gamma>0$ (depending on $\lam,\mathbf{a},p$), such that
    \[
    |\widehat{\nu_{\lam,\mathbf{a}}^p}(\xi)| \le C\, |\xi|^{-\gamma} \quad\text{for all }\xi\in\C\setminus\{0\}.
    \]
\item[\rm{(iii)}] For any $k\in\N$, $\eta>0$, $p\in\mathbb{P}_\Lambda$, and $\mathbf{a}=(a_i)_{i\in\Lambda}$ with all $a_i$ different,
\[
\lim_{\delta\to 0}\dim_H(\{\lam\in\C\setminus\R: |\lambda|\in (1-\delta,1), |\Im(\lam)|>\eta \text{ and } \nu_{\lam,\mathbf{a}}^p\notin C^k\})=0.
\]
\end{enumerate}
\end{mainthm}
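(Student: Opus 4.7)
I would adapt the Erdős--Kahane method from the real line to the complex plane, extending the two-map treatment of \cite{SolomyakXu03}. The Fourier transform of the self-similar measure admits the infinite Riesz product
\be \label{eq:riesz-prop}
\bigl|\wh{\nu_{\lam,\bba}^p}(\xi)\bigr| = \prod_{n=0}^\infty |M(\ov\lam^n\xi)|, \qquad M(w) := \sum_{i\in\Lambda} p_i\, e(\la a_i,w\ra),
\ee
where $\la\cdot,\cdot\ra$ is the real inner product on $\R^2\simeq\C$ and $e(t)=e^{2\pi it}$. Since the $a_i$ are not all equal, the locus $\{|M|=1\}$ coincides with a proper closed subgroup $\Gamma\subset\R^2$: a rank-two lattice when the $a_i$ affinely span $\R^2$, otherwise a union of parallel lines. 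In either case one has the quantitative bound $|M(w)|\le 1-c\min(\dist(w,\Gamma)^2,\tau_0^2)$ with $c,\tau_0>0$. Thus \eqref{eq:power-decay} reduces to the following: outside an exceptional set of $\lam$ of arbitrarily small Hausdorff dimension, for every sufficiently large $\xi$ a fixed positive proportion of the indices $0\le n\le N:=\lfloor C\log|\xi|\rfloor$ satisfies $\dist(\ov\lam^n\xi,\Gamma)\ge \tau$ for some uniform $\tau>0$.

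The heart of the matter is a Diophantine counting. Passing to the dual orbit $w_m:=\ov\lam^{-m}\eta$ with $|\eta|\asymp 1$ (so $|w_m|$ grows from $1$ up to $|\xi|$ as $m$ runs from $0$ to $N$), associate to each ``bad'' orbit the coding $(\gamma_m)_{m=0}^N\in\Gamma^{N+1}$ of nearest lattice points. The identity $w_{m+1}=\ov\lam^{-1}w_m$ forces $\gamma_{m+1}$ to lie within $O_H(1)$ of $\ov\lam^{-1}\gamma_m$, so only boundedly many continuations are admissible at each step. On the other hand, any admissible length-$N$ coding determines $\lam$ to precision $\le c_1|\lam|^{N}$, since $\gamma_N\approx\ov\lam^{-N}\eta$ pins $\ov\lam^{-N}$ (and hence $\lam$) with exponential sharpness in both real coordinates. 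A standard entropy argument in the spirit of the real Erdős--Kahane shows that, for any preassigned $\sigma>0$, the codings which have a positive-density set of ``small-residual'' indices number at most $e^{\sigma N}$. Covering the exceptional $\lam\in H$ by $e^{\sigma N}$ disks of radius $|\lam|^{N}$ yields Hausdorff dimension $O(\sigma/|\log|\lam||)$; sending $\sigma\to 0$ proves (i).

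The principal obstacle is the two-dimensional nature of the dynamics: in the real case a single digit pins $\lam$ to an interval of length $|\lam|^{-n}$, whereas here a lattice digit constrains $\lam$ in two real directions, and one must verify that the composite map $\lam\mapsto(\gamma_0(\lam),\dots,\gamma_N(\lam))$ is genuinely bi-Lipschitz with the claimed contraction rate. This is exactly where the restriction to compact subregions $H\subset\DmR$ bounded away from both the unit circle and the real axis enters. Part (ii) follows by the symmetric argument with $\lam$ fixed and $\bba$ (equivalently, the ratios $(a_k-a_i)/(a_j-a_i)$ that determine $\Gamma$) treated as the varying parameter: the lattice depends affinely on the translation data, so the coding and counting carry over with the roles of multiplier and shift interchanged.

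Finally, (iii) follows from a quantitative sharpening of (i). Tracking constants in the counting yields $\gamma(\lam)\gtrsim(\log\tfrac{1}{|\lam|})^{-1}$, reflecting the fact that the number of factors in \eqref{eq:riesz-prop} effective per decade of $|\xi|$ is $\asymp(\log\tfrac{1}{|\lam|})^{-1}$, each contributing a definite loss. Consequently $\gamma(\lam)\to\infty$ as $|\lam|\to 1^-$. Given $k\in\N$, choose $\delta$ small enough that $\gamma(\lam)>k+2$ for all $\lam$ with $|\lam|\in(1-\delta,1)$, $|\Im\lam|>\eta$, off a set of dimension $<\sigma$; for such $\lam$ the integrability $(1+|\xi|)^k\,\bigl|\wh{\nu_{\lam,\bba}^p}(\xi)\bigr|\in L^1(\C)$ yields a $C^k$ density via Fourier inversion. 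Sending $\sigma\to 0$ concludes.
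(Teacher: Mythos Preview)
Your outline for parts (i) and (ii) follows the Erd\H{o}s--Kahane scheme the paper carries out, and the overall architecture (Riesz product, integer coding of the orbit, entropy count, covering by exponentially small disks) is the same. One point is sloppier than you make it sound: the assertion that ``$\gamma_N\approx\ov\lam^{-N}\eta$ pins $\ov\lam^{-N}$'' is false as stated, since $\eta$ ranges over an annulus and is not known. What actually pins $\lam$ is the relation between \emph{consecutive} digits, and in the rank-one case --- which is the one the paper reduces to, normalizing $a_1=0$, $a_2=1$ so that only $\Re(\theta^n t)$ is quantized to integers $K_n$ --- recovering $\theta=\lam^{-1}$ from the $K_n$ is nontrivial: four consecutive values are needed, via the reconstruction in Lemma~\ref{lem-tech}, and its derivative bounds are precisely where the restriction $\Im\theta>\eta$ is used. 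You flag this as the principal obstacle but do not indicate how to overcome it.

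Part (iii) has a genuine gap. Your claim that tracking constants in (i) gives $\gamma(\lam)\gtrsim(\log\tfrac{1}{|\lam|})^{-1}$ off a set of arbitrarily small dimension does not follow from the Erd\H{o}s--Kahane count. In Proposition~\ref{prop:EK-combinatorial} both the decay exponent $\gamma$ and the dimension bound on $\Ek_\delta$ carry the same factor $(\log b_1)^{-1}$, and their ratio is bounded independently of $b_1$; thus as $|\lam|\to 1$ one cannot force $\gamma>k+2$ while keeping $\dim_H(\Ek_\delta)$ below a prescribed $\sigma$. The paper avoids this trade-off altogether via the convolution identity
\[
\nu_{\lam,\bba}^p \;=\; \nu_{\lam^\ell,\bba}^p * S_\lam\nu_{\lam^\ell,\bba}^p * \cdots * S_{\lam^{\ell-1}} \nu_{\lam^\ell,\bba}^p,
\]
applying part (i) with a \emph{uniform} $\gamma$ to $\lam^\ell$ lying in the fixed region $H_{2,4,\eta/5}$, bounded well away from the unit circle, and then reading off decay $|\xi|^{-\ell\gamma}$ from the $\ell$-fold product of Fourier transforms. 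Choosing $\ell$ with $\ell\gamma>k+2$ gives the $C^k$ density; the exceptional set is the countable union $\bigcup_{\ell\ge\ell_0}\{\lam:\lam^{-\ell}\in\mathcal E\}$, still of dimension $<\e$ since each $\lam\mapsto\lam^{-\ell}$ is locally bi-Lipschitz. This bootstrapping through the self-convolution structure is the missing idea.
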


Some applications and variants of our main results will be briefly discussed in Section \ref{sec:generalizations}.

\section{Power Fourier decay: Proof of Theorem \ref{mainthm:E-K} }

We start by proving the first part of Theorem \ref{mainthm:E-K}, yielding power Fourier decay of $\nu_{\lam,\bba}^p$ for non-real $\lam$ outside of a zero dimensional set.

\begin{proof}[Proof of Theorem \ref{mainthm:E-K}(i)]
Let $\lam$ be a complex number of modulus less than 1, non-real. Suppose for simplicity that $\Lambda=\{1,\ldots,m\}$ with $a_1\neq a_2$. Since replacing $a_i$ by $(a_i-a_1)/(a_2-a_1)$ has the effect of applying a linear map to the measures in question, we may assume that $a_1=0$ and $a_2=1$.  The definition of $\nu_{\lam,\bba}^p$ as a self-similar measure easily yields
\be \label{eq:FT-product}
\widehat{\nu_{\lam,\bba}^p}(\xi) = \int_\C e^{2\pi i \Re(z\ov{\xi})}\,d\nu_{\lam,\bba}^p(z) = \prod_{n=0}^\infty \sum_{j=1}^m p_j \exp[2\pi i \Re(\lam^n a_j \ov{\xi})].
\ee
Then
\begin{eqnarray*}
|\widehat{\nu_{\lam,\bba}^p}(\xi)| & \le & \prod_{n=0}^\infty \Bigl(\bigl|p_1 + p_2 e^{2\pi i \Re(\lam^n \ov{\xi})}\bigr| + (1-p_1-p_2)\Bigr) \\
                                & \le & \prod_{n=0}^\infty \bigl(1 - c_1\|\Re(\lam^n \ov{\xi})\|^2\bigr),
\end{eqnarray*}
for some $c_1>0$ depending only on $p$. Here and below $\|x\|$ denotes the distance from $x$ to the nearest integer.
Let $\xi = \ov{t \lam^{-N}}$ with $|t| \in [1, |\lam|^{-1}]$. Then we have, denoting $\theta=\lam^{-1}$:
\be \label{eq:EKdecay}
|\widehat{\nu_{\lam,\bba}^{p}}(\xi)| \le  \prod_{n=1}^N (1-c_1\|\Re(\theta^n t)\|^2).
\ee
Therefore, the desired power decay of the Fourier transform will follow if   $\|\Re(\theta^n t)\|$ is bounded away from zero
for $n$ in a subset of positive lower density, uniformly in $t$, satisfying $|t|\in [1,|\theta|]$.

Given $1< b_1 < b_2<\infty$ and $\eta>0$, recall that
\be \label{defH}
H_{b_1,b_2,\eta} = \{ z \in \C:\ b_1 \le  |z| \le  b_2,\ \Im(z) > \eta\}.
\ee
Clearly, it is enough to prove the  claim concerning  $\{\lam:\ \lam^{-1}\in H_{b_1,b_2,\eta}\}$ for
all $b_1, b_2,\eta$ (by symmetry, we can assume that $\lam$ is in the upper half-plane). Thus, we will fix $b_1,b_2,\eta$ below.

\begin{prop} \label{prop:EK-combinatorial}
There is a constant $\rho=\rho(b_1,b_2,\eta)>0$ such that the set
\[\Ek_{N,\delta}:=
\left\{ \theta\in H_{b_1,b_2,\eta}:\, \frac{1}{N} \min_{|t|\in [1,|\theta|]} \#\{n\in \{1,\ldots,N\}:\, \|\Re(\theta^n t)\|\le \rho\} > 1- \delta\right\}
\]
can be covered by $\exp(C_2\delta\log(1/\delta) N)$ balls of radius $b_1^{-N}$, where $C_2=C_2(b_1,b_2,\eta)$ is independent of $N$.
\end{prop}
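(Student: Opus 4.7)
The plan is to adapt the Erd\H{o}s--Kahane combinatorial argument to the complex setting, crucially exploiting that the hypothesis $\min_{|t|\in[1,|\theta|]}\#\{\ldots\}>(1-\delta)N$ is uniform in $t$.

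The first step is to specialize to $t=1$ and $t=i$ to extract a Gaussian integer structure. For each $\theta\in\Ek_{N,\delta}$, the good sets $G_1=\{n:\|\Re\theta^n\|\le\rho\}$ and $G_i=\{n:\|\Im\theta^n\|\le\rho\}$ both have cardinality at least $(1-\delta)N$, so $G:=G_1\cap G_i$ satisfies $|G^c|\le 2\delta N$, and for every $n\in G$ there is a Gaussian integer $z_n\in\Z[i]$ with $|\theta^n-z_n|\le\rho\sqrt 2$. Thus each $\theta$ is canonically encoded by the pair $(G,(z_n)_{n\in G})$.

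The second step shows that each such encoding determines $\theta$ to precision $\lesssim b_1^{-\max G}$, so each preimage can be covered by $\lesssim b_1^{2(N-\max G)}=\exp(O(\delta N))$ balls of radius $b_1^{-N}$. Indeed, given any two indices $m<m'$ in $G$ with bounded $\gcd(m,m')$ and both close to $\max G\ge(1-2\delta)N$ (always achievable since $|G^c|\le 2\delta N$), the combined constraints $|\theta^m-z_m|,|\theta^{m'}-z_{m'}|\le\rho\sqrt 2$ jointly localize $\theta$ to a single ball of radius $\lesssim\rho\, b_1^{-m'}$.

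The third step, which is the heart of the argument, counts the encodings. The number of subsets $G\subseteq\{1,\ldots,N\}$ with $|G^c|\le 2\delta N$ is at most $\binom{N}{\le 2\delta N}\le\exp(H(2\delta)N)=\exp(O(\delta\log(1/\delta)N))$, already of the desired form. For each fixed $G$, the number of sequences $(z_n)_{n\in G}$ that can arise from some $\theta$ is bounded by decomposing $G$ into its maximal runs of consecutive indices separated by gaps $g_1,\ldots,g_k$ with $\sum_j g_j\le|G^c|\le 2\delta N$. Within each run, once two consecutive $z$'s are known they pin $\theta$ down via $\theta\approx z_{l+1}/z_l$, and a sufficiently small choice of $\rho=\rho(b_1,b_2,\eta)$ ensures that each subsequent $z_n$ in the run is determined up to an absolute constant number of Gaussian-integer options (using the a priori constraint $|\theta^n-z_n|\le\rho\sqrt 2$). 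At the start of each new run, a gap of length $g_j$ degrades the precision of $\theta$ by a factor $|\theta|^{g_j}\le b_2^{g_j}$, contributing at most $\exp(O(g_j\log b_2))$ options for the initial Gaussian integer; summing over runs gives $\exp(O((\sum_j g_j)\log b_2))=\exp(O(\delta N\log b_2))$.

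Combining all three bounds yields a covering of $\Ek_{N,\delta}$ by $\exp(C_2\delta\log(1/\delta)N)$ balls of radius $b_1^{-N}$, as required. The principal obstacle is the claim in the third step that, within a single run of consecutive good indices, the recurrence remains rigid up to $O(1)$ choices per step even far from the first two observations, where the approximation $\theta\approx z_{l+1}/z_l$ has precision only $\sim\rho/b_1^l$; the prediction for $z_n$ based solely on this approximation has precision $\sim\rho(b_2/b_1)^{n-l}$ which can exceed~1. Here one must carefully combine this prediction with the a priori bound $|\theta^n-z_n|\le\rho\sqrt 2$ at each good step and exploit smallness of $\rho$ to keep the number of Gaussian integer candidates uniformly bounded within runs, so that the only nontrivial contributions to the count come from the run-starts and are controlled by the total gap length.
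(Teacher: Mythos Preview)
Your approach is genuinely different from the paper's. You specialise to $t=1$ and $t=i$, obtaining Gaussian integers $z_n\approx\theta^n$ and a natural two-term recurrence. The paper instead fixes a \emph{single} (arbitrary) $t$, records only the real integers $K_n$ nearest to $\Re(\theta^n t)$, and reconstructs the missing imaginary data from four consecutive $K_n$'s via an explicit algebraic inversion (their Lemma~2.2); this yields a prediction of $K_{n+2}$ from $K_{n-3},\dots,K_{n+1}$ which is exact when the six relevant errors are below $\rho$ and has a bounded ambiguity otherwise (their Lemma~2.4). Your route avoids that technical lemma at the cost of using two values of $t$.

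There is, however, a real gap in your third step. Saying each $z_n$ within a run is determined ``up to an absolute constant number'' of options is fatal as stated: if that constant exceeds $1$, the product over runs of total length $(1-O(\delta))N$ contributes $\exp(O(N))$, not $\exp(O(\delta\log(1/\delta)N))$. Your obstacle paragraph compounds this, since you analyse the prediction error from the \emph{fixed} initial approximation $\theta\approx z_{l+1}/z_l$ and find it growing in $n-l$, and your proposed remedy (``combine with the a priori bound $|\theta^n-z_n|\le\rho\sqrt2$'') is not a concrete mechanism---that bound involves the unknown $\theta$. What is actually needed is to \emph{update at every step}: using the most recent pair and writing $z_j=\theta^j+\alpha_j$ with $|\alpha_j|\le\rho\sqrt2$, one checks directly that
\[
\bigl|z_{n+1}-z_n^2/z_{n-1}\bigr|\ \le\ \sqrt{2}\,(1+b_2)^2\,\rho + O(\rho^2),
\]
uniformly for $n\ge 2$. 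Thus for $\rho$ below a threshold depending only on $b_2$ the next Gaussian integer is \emph{uniquely} determined within every run---the exact analogue of the uniqueness clause in the paper's Lemma~2.4(i)---and the within-run contribution is genuinely trivial. With this in place, your run-start estimate $\exp(O(\sum_j g_j))$ and the binomial count for $G$ finish the argument.
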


Let us derive Theorem~\ref{mainthm:E-K}(i) first. Consider the set
$$
\Ek_{\delta} = \limsup_N \Ek_{N,\delta} = \bigcap_{k=1}^\infty \bigcup_{N = k}^\infty \Ek_{N,\delta}.
$$
By Proposition~\ref{prop:EK-combinatorial}, $\dim_H(\Ek_{\delta} ) \le \frac{C_2 \delta\log(1/\delta)}{\log b_1}$. On the other hand, for
$\lam^{-1}=\theta\in H_{b_1,b_2,\eta} \setminus \,\Ek_{\delta}$ we have for all $N\ge N_0$:
$$
|\widehat{\nu_{\lam,\bba}^{p}}(\xi)| \le (1-c_1\rho^2)^{\delta N}\ \ \mbox{for}\ \ |\xi| \in [|\theta|^N, |\theta|^{N+1}],
$$
hence
$$
|\widehat{\nu_{\lam,\bba}^{p}}(\xi)| \le (b_2|\xi|)^{\delta \log(1-c_1\rho^2)/\log b_2}\ \ \mbox{for all}\ \ |\xi| \ge b_1^{N_0}.
$$
Since $\dim_H(\Ek_\delta)\to 0$ as $\delta\to 0$, this yields the desired power Fourier decay outside of a set of small dimension.
 \end{proof}

\medskip

\begin{proof}[Proof of Proposition~\ref{prop:EK-combinatorial}]
Following the  scheme of the Erd\H{o}s-Kahane's argument, let
\be \label{eq:K+eps}
\Re(\theta^n t) = K_n +\eps_n,\ \ \ \mbox{where}\ \ K_n \in \Z, \ |\eps_n|\le 1/2.
\ee

In the next technical lemma we show how to recover $\theta$ and $z_0$ (under appropriate conditions),
given $x_j = \Re(\theta^j z_0),\ j=0,1,2,3$.
Recall that $b_1,b_2,\eta$ are fixed and for $R_0>0$ let
$$
V_{R_0} =\left\{\bx = (x_0,x_1,x_2,x_3):\ x_j = \Re(\theta^j z_0),\ 0\le j\le 3,\ |z_0|\ge R_0,\ \theta\in H_{b_1,b_2,\eta}\right\}.
$$
Denote by $\Nk_\eps (V)$ the $\eps$-neighborhood of $V$ in the $\ell^\infty$ metric.
\begin{lemma} \label{lem-tech}
There exist $R_0>0$ and $C_3>0$ depending only on $b_1,b_2,\eta$ such that
there are continuously differentiable functions $$F:\,\Nk_1(V_{R_0})\to \{\theta:\ |\theta|>1, \ \Im(\theta)>0\}\ \ \mbox{and}\ \  G:\,\Nk_1(V_{R_0})\to \R,$$ such that $\theta = F(\bx)$ and $y_3 = G(\bx)$ satisfy
\be \label{equs}
x_j = \Re(\theta^{j-3} (x_3+iy_3)),\ \ \ 0\le j \le 2.
\ee
Moreover,
$$
\left|\frac{\partial G}{\partial x_j}\right| \le C_3,\ \ \ 0\le j\le 3,\ \ \ \mbox{on}\ \ \Nk_1(V_{R_0}).
$$
\end{lemma}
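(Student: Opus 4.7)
The plan is to invert the map $(\theta,z_0) \mapsto (\Re(\theta^j z_0))_{0 \le j \le 3}$ via the linear recurrence
\[
w_{n+2} - 2u\, w_{n+1} + r^2 w_n = 0,\qquad u := \Re\theta,\ r := |\theta|,
\]
satisfied by $w_n := \theta^n z_0$ (simply because $\theta$ is a root of $X^2 - 2uX + r^2$). Taking real parts yields $x_{n+2} = 2u\, x_{n+1} - r^2 x_n$ for $n=0,1$, a $2\times 2$ linear system for $(2u,-r^2)$ with coefficient matrix $\left(\begin{smallmatrix} x_1 & x_0 \\ x_2 & x_1 \end{smallmatrix}\right)$. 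I will \emph{define} the candidate $F(\bx) := u+iv$ by inverting this system explicitly, setting $v := \sqrt{r^2 - u^2}$, and define $G(\bx) := (r^2 x_2 - u x_3)/v$; the latter formula comes from the identity $y_{n+1} = (r^2 x_n - u x_{n+1})/v$ at $n=2$, obtained by eliminating $y_n$ between $x_{n+1} = u x_n - v y_n$ and $y_{n+1} = v x_n + u y_n$. On $V_{R_0}$ these formulas recover the original $\theta$ and $y_3$ by construction, so \eqref{equs} holds there; the extension to $\Nk_1(V_{R_0})$ is automatic once well-posedness of the formulas is established.

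The well-posedness rests on the identity
\[
x_1^2 - x_0 x_2 \;=\; v^2 |z_0|^2,
\]
which follows at once from $x_n = \tfrac{1}{2}(\theta^n z_0 + \bar\theta^n \bar z_0)$ together with $|\theta|^2 - \Re(\theta^2) = 2v^2$. Thus the determinant of the $2\times 2$ system is at least $\eta^2 R_0^2$ on $V_{R_0}$, while each $|x_j|$ is bounded by $b_2^3 |z_0|$. A unit $\ell^\infty$-perturbation of $\bx$ therefore alters the relevant quadratic expressions $x_1^2 - x_0 x_2$, $x_1 x_2 - x_0 x_3$, $x_2^2 - x_1 x_3$ by at most $O(b_2^3 |z_0|)$, which is negligible against the leading $|z_0|^2$ size for $|z_0|$ large. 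Choosing $R_0 = R_0(b_1,b_2,\eta)$ sufficiently large then keeps the determinant $\ge \tfrac{1}{2}\eta^2 R_0^2$ and $v \ge \eta/2$ throughout $\Nk_1(V_{R_0})$, so $F$ and $G$ are well-defined and continuously differentiable there.

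What remains is the derivative bound $|\partial G/\partial x_j| \le C_3$, and this is where a scaling argument is essential. Since $u$ and $r^2$ are ratios of quadratic forms in $\bx$, they are homogeneous of degree $0$, making $F$ degree $0$ and $G$ degree $1$; hence each $\partial G/\partial x_j$ is homogeneous of degree $0$, i.e., a function of the direction $\bx/|\bx|$ alone. The direction set of $V_{R_0}$ is independent of $R_0$ (since $V_{R_0}$ is invariant under positive real scaling of $z_0$) and equals the compact image of $(\theta,\phi)\mapsto (\Re(\theta^j e^{i\phi}))_j/\|\cdot\|$ on $H_{b_1,b_2,\eta}\times[0,2\pi]$; on this compact set the denominators above are uniformly positive (no collinearity is possible because $\Im\theta > \eta$). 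As $R_0 \to \infty$ the unit perturbation becomes relatively negligible, so for $R_0$ large the direction set of $\Nk_1(V_{R_0})$ lies in a fixed compact neighborhood on which $\partial G/\partial x_j$ is continuous, yielding the uniform bound $C_3 = C_3(b_1,b_2,\eta)$. The only real obstacle is bookkeeping of the threshold $R_0$; everything else reduces to direct computation from the recurrence.
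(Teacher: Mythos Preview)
Your construction of $F$ and $G$ coincides with the paper's: both solve the $2\times 2$ linear system for $(\Re\theta,|\theta|^2)$ with determinant $x_1^2-x_0x_2$, both rely on the identity $x_1^2-x_0x_2=(\Im\theta)^2|z_0|^2$ to bound that determinant from below, and both extend to the unit neighborhood by taking $R_0$ large so that an $O(|z_0|)$ perturbation of the quadratic expressions is harmless. (Your formula $G(\bx)=(r^2x_2-ux_3)/v$ is a bit more direct than the paper's, which first recovers $y_0$ and then multiplies by $\theta^3$, but this is cosmetic.) The genuine difference is in the derivative bound: the paper tracks orders of magnitude explicitly, computing $\partial_j|\theta|^2,\ \partial_j\alpha,\ \partial_j\beta=O_{b_2,\eta}(|z_0|^{-1})$ and then $\partial_j y_0,\ \partial_j y_3=O_{b_2,\eta}(1)$, whereas you observe that $u$, $r^2$, $v$ are homogeneous of degree $0$ in $\bx$, hence $G$ has degree $1$ and each $\partial_jG$ degree $0$, reducing the bound to continuity on a compact set of directions. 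Your scaling argument is cleaner and avoids the term-by-term bookkeeping; the paper's is more concrete. Two minor points: $H_{b_1,b_2,\eta}$ as defined in the paper has the open condition $\Im z>\eta$, so pass to its closure when you invoke compactness of the direction set; and your claim that \eqref{equs} extends ``automatically'' to $\Nk_1(V_{R_0})$ is correct, but the reason---that the two recurrence relations defining $(u,r^2)$ together with your formula for $y_3$ encode exactly the three equations in \eqref{equs} as algebraic identities---deserves one explicit sentence.
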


\begin{proof}
Writing $z_0 = x_0 + iy_0$ and $\theta = \alpha + i \beta$ we obtain from (\ref{equs}) the system
of equations:
\be \label{eq-vector2}
\begin{array}{l} \alpha x_0 - \beta y_0 = x_1 \\ (\alpha^2-\beta^2) x_0 - 2 \alpha \beta y_0 = x_2 \\
(\alpha^3-3\alpha\beta^2)x_0 - (3\alpha^2\beta - \beta^3) y_0 = x_3 \end{array}
\ee
Eliminating $y_0$ from the first equation results in a $2\times 2$ linear system for $|\theta|^2$ and $\alpha$, yielding
\be \label{sol1}
|\theta|^2 = |\theta|^2(\bx)=\frac{x_2^2-x_1 x_3}{x_1^2-x_0 x_2}\ \ \mbox{and}\ \  \alpha = \alpha(\bx)=\frac{x_1x_2-x_0x_3}{2(x_1^2 - x_0 x_2)}.
\ee
We then have
\be \label{sol2}
\beta = \beta(\bx)=\sqrt{|\theta|^2(\bx) - \alpha^2(\bx)}\ \ \ \mbox{and}\ \ \
 y_0 = y_0(\bx)=\frac{\alpha(\bx) x_0-x_1}{\beta(\bx)}\,.\ee
Finally,
$
F(\bx) = \alpha(\bx) + i\beta(\bx)$ and
\begin{eqnarray} G(\bx) = y_3 & = & \Im(\theta^3(x_0+y_0)) \nonumber \\
& = &
(3\alpha^2(\bx)\beta(\bx) - \beta^3(\bx))x_0 + (\alpha^3(\bx)-3\alpha(\bx)\beta^2(\bx))y_0(\bx).\label{sol3}
\end{eqnarray}
Writing $\theta$ and $z_0$ in polar coordinates, we obtain
\be \label{polar}
x_1^2-x_0x_2 = |z_0|^2 \beta^2\ge |z_0|^2 \eta^2,
\ee
showing that the denominators in (\ref{sol1}) are bounded away from zero in $V_{R_0}$. On the other hand,
$$
\|\bx\|_\infty \le |z_0|b_2^3,\ \ \ \mbox{for}\ \ \bx\in V_{R_0}.
$$
Assuming that $|\Delta x_i|\le 1$, we have
$$
|(x_1+\Delta x_1)^2 - (x_0 + \Delta x_0) (x_2 + \Delta x_2) - (x_1^2-x_0 x_2)| \le 4\|\bx\|_\infty + 2
= O_{b_2}(|z_0|).
$$
Together with (\ref{polar}), this shows that the denominators in (\ref{sol1}) are bounded away from zero in $\Nk_1(V_{R_0})$, and so $F(\bx)=\theta(\bx)$ is well-defined
  in the neighborhood $\Nk_1(V_{R_0})$, for $R_0$ sufficiently large.
We then have
$$
(x_1^2-x_0x_2)^{-1} = O_{\eta}(|z_0|^{-2})\ \ \ \mbox{and}\ \ \ \|\bx\|_\infty = O_{b_2}(|z_0|)
$$
in the whole neighborhood $\Nk_1(V_{R_0})$.
Therefore, (\ref{sol1}) implies
$$
\left|\frac{\partial |\theta|^2}{\partial x_j}\right| = O_{b_2,\eta}(|z_0|^{-1}) = O_{b_2,\eta}(R_0^{-1})\ \ \
\mbox{and}\ \ \
\left|\frac{\partial \alpha}{\partial x_j}\right| = O_{b_2,\eta}(R_0^{-1}).
$$
Finally, (\ref{sol2}) and (\ref{sol3}) yield
$$
\left|\frac{\partial \beta}{\partial x_j}\right| = O_{b_2,\eta}(R_0^{-1}),\ \ \
\left|\frac{\partial y_0}{\partial x_j}\right| = O_{b_2,\eta}(1),\ \ \ \mbox{and}\ \ \
\left|\frac{\partial y_3}{\partial x_j}\right| =O_{b_2,\eta}(1),
$$
all in the entire neighborhood $\Nk_1(V_{R_0})$.
The lemma is proved.
\end{proof}

Now denote
$$
Y_n = \Im(\theta^n t), \ \ \ n\in \N.
$$
Then we have from \eqref{eq:K+eps} and Lemma~\ref{lem-tech} for $n\ge N_0=N_0(b_1,b_2,\eta)$:
$$
Y_{n+3} = G (K_n + \eps_n,\ldots, K_{n+3}+\eps_{n+3}).
$$
Let
\be \label{def-G}
\wtil{Y}_{n+3} = G(K_n,\ldots,K_{n+3}),\ \ \ n\ge N_0,
\ee
which is well-defined, since $|\eps_k|\le 1/2$.
From  Lemma~\ref{lem-tech} it also follows that
\be \label{eq2}
|Y_{n} - \wtil{Y}_{n}| \le 4 C_3 \max\{|\eps_{n-3}|,\ldots,|\eps_{n}|\},\ \ \ n\ge N_0+3=:N_1
\ee
(the factor $4$ comes from the estimate $\|\bx\|_1 \le 4\|\bx\|_\infty$).

\begin{lemma} \label{lem-theta}
There exist $C_4>0$ and $N_2\in \N$, which depend only on $b_1,b_2,\eta$, such that
$$
\left|\theta - \frac{K_{n+1} + i \wtil{Y}_{n+1}}{K_n + i\wtil{Y}_n}\right| \le C_4|\theta|^{-n} \max\{|\eps_{n-3}|,\ldots,|\eps_{n+1}|\}\ \ \ \mbox{for}\ n\ge N_2.
$$
\end{lemma}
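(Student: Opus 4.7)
The plan is to exploit the exact identity $\theta = z_{n+1}/z_n$, where $z_n := \theta^n t = (K_n+\eps_n)+i Y_n$, and compare it with the approximation $w_{n+1}/w_n$, where $w_n := K_n + i\wtil{Y}_n$. Writing
\[
\theta - \frac{w_{n+1}}{w_n} \;=\; \frac{z_{n+1}w_n - z_n w_{n+1}}{z_n w_n} \;=\; \frac{z_{n+1}(w_n - z_n) - z_n(w_{n+1} - z_{n+1})}{z_n w_n},
\]
I would estimate the numerator and denominator separately, the numerator in terms of the quantities $|w_k - z_k|$ already controlled via Lemma~\ref{lem-tech} and \eqref{eq2}, and the denominator from below by the trivial estimate $|z_n|=|\theta|^n|t|\ge |\theta|^n$.

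For the denominator, note that $|w_k - z_k| = |{-\eps_k} + i(\wtil{Y}_k - Y_k)|$, and by \eqref{eq2} plus $|\eps_k|\le 1/2$ this is bounded by a fixed constant $C' = C'(b_1,b_2,\eta)$ for all $k\ge N_1$. Since $|z_n|\ge |\theta|^n\ge b_1^n$, for $n$ larger than some $N_2$ depending on $b_1,b_2,\eta$ we get $|w_n|\ge |z_n|/2$, hence $|z_n||w_n| \ge \tfrac12 b_1^{2n}$, which in particular is $\ge \tfrac12|\theta|^{2n}\cdot|t|^2/b_2^2$. For the numerator, I would apply \eqref{eq2} with the indices $k = n$ and $k = n+1$ to get
\[
|w_k - z_k| \le |\eps_k| + 4C_3\max\{|\eps_{k-3}|,\ldots,|\eps_k|\} \le (1+4C_3)\max\{|\eps_{n-3}|,\ldots,|\eps_{n+1}|\}
\]
for $k\in\{n,n+1\}$, and combine with $|z_n|\le |\theta|^n|t|\le |\theta|^{n+1}$ and similarly for $z_{n+1}$. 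Putting this together gives
\[
|z_{n+1}(w_n-z_n) - z_n(w_{n+1}-z_{n+1})| = O_{b_2}\!\bigl(|\theta|^{n+1}\bigr)\cdot \max\{|\eps_{n-3}|,\ldots,|\eps_{n+1}|\}.
\]
Dividing by $\asymp |\theta|^{2n}$ produces the claimed bound $C_4|\theta|^{-n}\max\{|\eps_{n-3}|,\ldots,|\eps_{n+1}|\}$, with $C_4$ depending only on $b_1,b_2,\eta$.

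I do not anticipate any real obstacle here: the entire difficulty has already been absorbed into Lemma~\ref{lem-tech}, which furnishes the Lipschitz map $G$ used to define $\wtil{Y}_n$ and the resulting bound \eqref{eq2}. The only point requiring care is ensuring that the denominator $|z_n w_n|$ is $\gtrsim |\theta|^{2n}$ rather than just $\gtrsim b_1^{2n}$; this is why $|t|\ge 1$ is used, and why we take $n\ge N_2$ large enough (depending on $b_1,b_2,\eta$) that the fixed perturbation $|w_n - z_n|\le C'$ is negligible compared to $|z_n|$. The estimate is essentially algebraic and completes the Erd\H{o}s--Kahane style recovery of $\theta$ from four consecutive integer parts $K_{n-3},\ldots,K_n$.
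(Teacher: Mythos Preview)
Your argument is correct and essentially identical to the paper's: both compare $\theta=z_{n+1}/z_n$ with $w_{n+1}/w_n$ using $|z_n|=|\theta|^n|t|\in[|\theta|^n,|\theta|^{n+1}]$ together with the bound \eqref{eq2}, the only difference being that the paper splits the estimate into two triangle-inequality steps (via the intermediate quotient $(K_{n+1}+iY_{n+1})/(K_n+iY_n)$) rather than using your single algebraic identity for a difference of fractions. One minor slip: the chain ``$|z_n||w_n|\ge\tfrac12 b_1^{2n}\ge \tfrac12|\theta|^{2n}|t|^2/b_2^{2}$'' is not valid as written, but you do not need it, since $|w_n|\ge|z_n|/2\ge|\theta|^n/2$ already gives $|z_n w_n|\ge|\theta|^{2n}/2$ directly.
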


\begin{proof}
First we write
\begin{eqnarray*}
\left|\theta - \frac{K_{n+1} + i Y_{n+1}}{K_n + iY_n}\right| & = & \left| \frac{K_{n+1} + \eps_{n+1} + i Y_{n+1}}{K_n +\eps_n + iY_n} - \frac{K_{n+1} + i Y_{n+1}}{K_n + iY_n}\right| \\[1.2ex]
&\le  & \frac{|\eps_{n+1}|}{|K_n +\eps_n + iY_n|} + \frac{|\eps_n|}{ |K_n +\eps_n + iY_n|}\cdot \frac{|K_{n+1} + iY_{n+1}|}{|K_n + iY_n|} \\[1.3ex]
& = & O_{b_1,b_2,\eta}(|\theta|^{-n}) \cdot\max\{|\eps_n|, |\eps_{n+1}|\},
\end{eqnarray*}
using that  $$|K_n +\eps_n + iY_n|=|\theta|^n |t|\in [|\theta|^n,|\theta|^{n+1}]$$
and
$|K_n+iY_n|\in [\half|\theta|^n, 2|\theta|^{n+1}]$
 in the last step. Then we estimate
\begin{eqnarray*}
\left| \frac{K_{n+1} + i Y_{n+1}}{K_n + iY_n} - \frac{K_{n+1} + i \wtil{Y}_{n+1}}{K_n + i\wtil{Y}_n}\right|
& \le & \frac{|\wtil{Y}_{n+1}-Y_{n+1}|}{|K_n + i \wtil{Y}_n|} + \frac{|\wtil{Y}_n-Y_n| \cdot |K_{n+1}+iY_{n+1}|}{|K_n+Y_n|\cdot|K_n + \wtil{Y}_n|} \\[1.2ex] & = & O_{b_1,b_2,\eta}(|\theta|^{-n} )\cdot\max\{|\eps_{n-3}|,\ldots,|\eps_{n+1}|\},
\end{eqnarray*}
for $n$ sufficiently large,
using (\ref{eq2}). The claim of the lemma follows.
\end{proof}

Now we continue the proof of Proposition \ref{prop:EK-combinatorial}, following the general scheme of the ``Erd\H{o}s-Kahane argument'', see e.g. \cite{PSS00, ShmerkinSolomyak14}.
By Lemma~\ref{lem-theta},
\be \label{theta-bound}
\theta \in B\bigl(\Psi(K_{N-3},\ldots,K_{N+1}), C_4b_1^{-N}\bigr),\ \ \ N\ge N_2,
\ee
where
$$
 \Psi(K_{N-3},\ldots,K_{N+1}) = \frac{K_{N+1} + iG(K_{N-2},\ldots,K_{N+1})}{K_N + iG(K_{N-3},\ldots,K_{N})}\,.
$$
Thus we need to estimate the number of possible integer sequences $(K_n)_{n\le N}$.
By Lemma~\ref{lem-theta},
$$
\left| \frac{K_{n+2} + i \wtil{Y}_{n+2}}{K_{n+1} + i\wtil{Y}_{n+1}} - \frac{K_{n+1} + i \wtil{Y}_{n+1}}{K_n + i\wtil{Y}_n}\right| \le 2C_4|\theta|^{-n} \max\{|\eps_{n-3}|,\ldots,|\eps_{n+2}|\}\ \ \ \mbox{for}\ n\ge N_2.
$$
It follows that for some $C_5 = C_5(b_1,b_2,\eta)$,
\be \label{eq3}
\left|K_{n+2} - \Re\left(\frac{(K_{n+1} + i \wtil{Y}_{n+1})^2}{K_n + i \wtil{Y}_n}\right)\right| \le C_5 \max\{|\eps_{n-3}|,\ldots,|\eps_{n+2}|\}  \ \ \mbox{for}\ n\ge N_2.
\ee
Let
$$
\rho:= (2C_5)^{-1}\ \ \ \mbox{and}\ \ \ M:= 2C_5+1.
$$
The estimate (\ref{eq3}), together with (\ref{def-G}), immediately implies the following

\begin{lemma} \label{lem-immed} Consider an arbitrary $\theta \in H_{b_1,b_2,\eta}$ and $t\in \C$, with $|t|\in [1,|\theta|]$, and define the corresponding sequences $K_n, \eps_n$ by \eqref{eq:K+eps}. Then the following holds:

{\rm (i)} If $\max\{|\eps_{n-3}|,\ldots,|\eps_{n+2}|\}\le \rho$ for $n\ge N_2$, then $K_{n+2}$ is uniquely determined by  $(K_j)_{j=n-3}^{n+1}$;

{\rm (ii)} for all $n\ge N_2$, there are at most $M$ choices for $K_{n+2}$, given $(K_j)_{j=n-3}^{n+1}$.
\end{lemma}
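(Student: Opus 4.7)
The plan is to read the lemma as a direct corollary of the estimate \eqref{eq3} combined with the structure of the function $G$ from \eqref{def-G}. First I would observe that, by \eqref{def-G}, one has $\wtil{Y}_n = G(K_{n-3},\ldots,K_n)$ and $\wtil{Y}_{n+1} = G(K_{n-2},\ldots,K_{n+1})$, so the real quantity
\[
\Phi(K_{n-3},\ldots,K_{n+1}) := \Re\left(\frac{(K_{n+1}+i\wtil{Y}_{n+1})^2}{K_n+i\wtil{Y}_n}\right)
\]
is completely determined by the five consecutive integers $K_{n-3},\ldots,K_{n+1}$. Estimate \eqref{eq3} therefore confines $K_{n+2}$ to the real interval $[\Phi-C_5 E,\, \Phi + C_5 E]$, where $E := \max\{|\eps_{n-3}|,\ldots,|\eps_{n+2}|\}$.

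For part (i), the hypothesis $E\le \rho = (2C_5)^{-1}$ forces $|K_{n+2}-\Phi|\le 1/2$. Since $K_{n+2}\in\Z$ and $\Phi$ is a fixed real number determined by $(K_j)_{j=n-3}^{n+1}$, this leaves at most one possible value of $K_{n+2}$; should one worry about the boundary case of two integers exactly at distance $1/2$ from $\Phi$ (which requires $\Phi$ to be a half-integer), it suffices to take $\rho$ slightly smaller, for instance replacing $2C_5$ by $4C_5$ in the definition, which does not affect any subsequent estimate. For part (ii), the unconditional bound $|\eps_j|\le 1/2$ from the definition \eqref{eq:K+eps}, substituted into \eqref{eq3}, yields $|K_{n+2}-\Phi|\le C_5/2$, so $K_{n+2}$ lies in a real interval of length $C_5$. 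The number of integers in such an interval is at most $C_5+1 \le 2C_5+1 = M$, proving (ii).

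There is essentially no obstacle here: the paper itself signals that the lemma ``immediately'' follows from \eqref{eq3} and \eqref{def-G}. All the analytic content was packed into Lemmas \ref{lem-tech} and \ref{lem-theta}; the role of the present lemma is only to repackage those estimates as a combinatorial branching bound on the admissible integer sequences $(K_n)$, which will then be iterated (with the extra gain coming from the sparsity of ``bad'' indices $n$ where $E>\rho$) in the proof of Proposition~\ref{prop:EK-combinatorial} via the familiar Erd\H{o}s--Kahane counting argument.
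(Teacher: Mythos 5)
Your proof is correct and is exactly the argument the paper intends: the paper itself gives no details, stating only that the lemma is "immediately implied" by \eqref{eq3} and \eqref{def-G}, and your write-up supplies precisely those details (that $\Phi$ depends only on $(K_j)_{j=n-3}^{n+1}$ via the two evaluations of $G$, that $\rho=(2C_5)^{-1}$ pins $K_{n+2}$ to within $1/2$ of $\Phi$, and that the unconditional bound $|\eps_j|\le 1/2$ gives at most $2C_5+1=M$ integer choices). Your remark about the half-integer boundary case is a legitimate minor point that the paper glosses over, and your fix of shrinking $\rho$ is harmless.
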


Now we can finish the proof of Proposition \ref{prop:EK-combinatorial}.  Assume that $N>N_2$. Fix $\theta\in \Ek_{N,\delta}$ and $t$, with $|t|\in [1,|\theta|]$. Since $|\theta|\in [b_1,b_2]$, there
are $O_{b_1,b_2,\eta}(1)$ choices for the initial part of the sequence $K_1,\ldots,K_{N_2}$ (recall that
$K_n$ is the nearest integer to $\Re(\theta^n t)$). The set $J:=\{n\in [1,N]:\ |\eps_n|>\rho\}$ has cardinality
at most $\lfloor\delta N \rfloor$, by the definition of $\Ek_{N,\delta}$. In view of Lemma~\ref{lem-immed},
given $J$, there are at most $O_{b_1,b_2,\eta}(M^{5\delta N})$ choices for the sequence $K_1,\ldots,K_N$.
Thus the total number of sequences corresponding to points in $\Ek_{N,\delta}$, hence also the number of balls of radius $O_{b_1,b_2,\eta}(b_1^{-n})$ needed to cover $\Ek_{N,\delta}$, is at most
$$
O_{b_1,b_2,\eta}(M^{5\delta N}) {N \choose \lfloor \delta N\rfloor} = \exp(O_{b_1,b_2,\eta}(\delta \log(1/\delta)N)),
$$
as desired.
\end{proof}

The proof of the second part of Theorem \ref{mainthm:E-K} is similar, but simpler. We will therefore omit some details.

\begin{proof}[Proof of Theorem \ref{mainthm:E-K}(ii)]
Again, let $\Lambda=\{1,\ldots,m\}$ with $m\ge 3$. Since replacing $a_k$ by $(a_k-a_i)/(a_j-a_i)$ (for fixed $i,j$) has the effect of applying a homothety to $\nu_{\lam,\bba}^p$, it is enough to prove that there is a set $E\subset\C$ with $\dim_H(E)=0$, such that if $a_1=0, a_2=1$ and $a_3=u\in\C\setminus E$, then $\widehat{\nu_{\lam,\bba}^p}$ has a power Fourier decay. Hence, fix $\lam\in\DmR$ and $a_4,\ldots,a_m\in\C$, and write $\eta_{u,p} = \nu_{\lam,\bba}^p$, where $\bba=(0,1,u,a_4,\ldots,a_m)$.

By the expression \eqref{eq:FT-product} of $\widehat{\eta_{u,p}}$ as an infinite product, we get
\begin{align*}
|\widehat{\eta_{u,p}}(\xi)| & \le  \prod_{n=0}^\infty \Bigl(\bigl|p_1 + p_2 e^{2\pi i \Re(\lam^n \ov{\xi})} + p_3 e^{2\pi i \Re(\lam^n u \ov{\xi})} \bigr| + (1-p_1-p_2-p_3)\Bigr) \\
& \le  \prod_{n=0}^\infty \bigl(1 - c_1\max(\|\Re(\lam^n \ov{\xi})\|,\|\Re(\lam^n u \ov{\xi})\|)^2\bigr),
\end{align*}
for some constant $c_1$ depending on $p$. As in the proof of the first part, write $\theta=\lam^{-1}$, and  let $\xi = \ov{t \lam^{-N}}$ with $|t| \in [1, \theta]$. Then we have
\be \label{eq:EKdecay2}
|\widehat{\eta_{u,p}}(\xi)| \le  \prod_{n=1}^N \left(1-c_1\max(\|\Re(\theta^n t)\|,\|\Re(\theta^n u t)\|)^2\right).
\ee
Therefore, the task is to show that $\max(\|\Re(\theta^n t)\|,\|\Re(\theta^n u t)\|)$ is bounded away from zero
for $n$ in a subset of positive lower density, uniformly in $t$, such that $|t|\in [1,|\theta|]$.

It is enough to prove the claim in the region $H_r = \{ u\in\C: r^{-1}\le |u|\le r\}$ for each $r>1$.  As in the proof of part (i) of the theorem, this is a consequence of a combinatorial proposition (that we state and prove next); as the deduction is essentially identical, we omit it.
\end{proof}

\begin{prop} \label{prop:EK-combinatorial2}
There is a constant $\rho=\rho(r,\lam)>0$ such that
\[
\left\{ u\in H_{r}:\, \frac{1}{N} \min_{|t|\in [1,|\theta|]} \#\{n\in \{1,\ldots,N\}:\, \max(\|\Re(\theta^n t)\|,\|\Re(\theta^n u t)\|)\le \rho\} > 1- \delta\right\}
\]
can be covered by $\exp(C_2\delta\log(1/\delta) N)$ balls of radius $|\lambda|^{N}$, where $C_2=C_2(r)$ is independent of $N$.
\end{prop}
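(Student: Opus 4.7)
The plan is to mimic the Erd\H{o}s--Kahane argument of Proposition~\ref{prop:EK-combinatorial}, exploiting the crucial simplification that $\theta=\lam^{-1}$ is now \emph{fixed}: we need only recover the parameter $u$, not $\theta$ itself. Because $\theta$ satisfies the real quadratic $z^2 - 2\Re(\theta)z + |\theta|^2 = 0$, for any $w\in\C$ we have the linear recurrence
\be \label{eq:recurrence-prop2}
\Re(\theta^{n+2} w) = 2\Re(\theta)\, \Re(\theta^{n+1} w) - |\theta|^2\, \Re(\theta^n w).
\ee
This replaces Lemma~\ref{lem-tech} and involves only two previous terms, so no analogue of the four-point nonlinear inversion used to recover $\theta$ is needed.

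Given $u\in\Ek_{N,\delta}$, I would specialize the defining inequality to $t=1$ (permitted since $|1|\in[1,|\theta|]$). The uniformity over $t$ guarantees that the set $J=\{n\in\{1,\ldots,N\}:\|\Re(\theta^n u)\|>\rho\}$ has cardinality at most $\lfloor \delta N\rfloor$. Write $\Re(\theta^n u) = L_n + \eps_n$ with $L_n\in\Z$ and $|\eps_n|\le 1/2$, and pick $\rho=\rho(r,\lam)$ small enough that $(2|\Re(\theta)|+|\theta|^2+1)\rho < 1/2$. By \eqref{eq:recurrence-prop2}, whenever $\{n,n+1,n+2\}\cap J=\emptyset$, the integer $L_{n+2}$ is \emph{uniquely} the nearest integer to $2\Re(\theta)L_{n+1}-|\theta|^2 L_n$; in all cases $L_{n+2}$ lies in a bounded window depending only on $r$ and $\lam$, so there are at most $M=M(r,\lam)$ integer choices for $L_{n+2}$ given $(L_n,L_{n+1})$ -- a direct analogue of Lemma~\ref{lem-immed}.

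The combinatorial count then runs in parallel with the first proposition. The seeds $L_0,L_1$ admit $O_{r,\lam}(1)$ choices since $|u|\le r$ bounds $|\Re(u)|$ and $|\Re(\theta u)|$. Each index $n\le N$ with $\{n-2,n-1,n\}\cap J\neq\emptyset$ costs a factor of at most $M$ (and there are at most $3|J|\le 3\lfloor\delta N\rfloor$ such indices), while all other steps are forced. Summing over the $\binom{N}{\lfloor\delta N\rfloor}\le \exp(O(\delta\log(1/\delta)N))$ possible sets $J$ yields $\exp(O_{r,\lam}(\delta\log(1/\delta)N))$ admissible integer sequences $(L_n)_{n=0}^N$.

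Finally, each such sequence localizes $u$ in a ball of radius $O(|\lam|^N)$: knowing $L_{N-1}$ and $L_N$ up to additive error $1/2$, and using that $\theta$ is non-real, one solves a $2\times 2$ real linear system to determine $\theta^{N-1}u$ up to $O_\lam(1)$ in both real and imaginary parts; dividing by $\theta^{N-1}$ localizes $u$ to a ball of radius $O(|\theta|^{-(N-1)}) = O(|\lam|^N)$. I expect the only real obstacle to be careful bookkeeping of the constants $\rho$, $M$, and the initial window for $L_0, L_1$, all of which are permitted to depend on the fixed data $r$ and $\lam$. Conceptually the argument should be genuinely simpler than that of Proposition~\ref{prop:EK-combinatorial}, since the fixing of $\theta$ collapses the recovery step to a linear two-term recurrence.
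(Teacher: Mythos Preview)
Your argument is correct for the proposition as stated and is a genuine simplification of the paper's proof. Both rely on the same two-term linear recurrence $x_{n+2}=2\Re(\theta)\,x_{n+1}-|\theta|^2 x_n$ coming from the real minimal polynomial of $\theta$, and on the same Erd\H{o}s--Kahane counting. The difference is that the paper keeps $t$ arbitrary and therefore tracks \emph{two} integer sequences, $K_n\approx\Re(\theta^n t)$ and $L_n\approx\Re(\theta^n u t)$, recovering $u$ as the ratio
\[
u\ \approx\ \frac{L_{n+1}+i\beta^{-1}(\alpha L_{n+1}-L_n)}{K_{n+1}+i\beta^{-1}(\alpha K_{n+1}-K_n)}\,,
\]
whereas you specialize to $t=1$, track only $(L_n)$, and recover $u$ directly by reconstructing $\theta^{N-1}u$ from two consecutive real parts.

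One caveat is worth flagging. The $\min_{t}$ appearing in the displayed set is precisely what makes your specialization to $t=1$ legitimate; but the deduction of Theorem~\ref{mainthm:E-K}(ii) from \eqref{eq:EKdecay2} actually requires the analogous covering for the \emph{larger} set defined with $\max_t$ in place of $\min_t$ (the Fourier bound must hold for \emph{every} frequency, hence for every $t$, so the ``bad'' $u$ are those for which \emph{some} $t$ produces too many near-integer values). For that version your shortcut no longer works --- the witnessing $t$ may depend on $u$ and is not in your hands --- and one is forced to count pairs $(K_n,L_n)$ as the paper does. So your proof establishes the proposition exactly as written, but the paper's extra bookkeeping is what carries the application.
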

\begin{proof}
Write
\begin{align*}
\Re(\theta^n t) &= K_n+\eps_n = x_n,\\
\Re(\theta^n u t) &= L_n+\delta_n = x'_n,
\end{align*}
with $|\e_n|,|\delta_n|<1/2$. Recall that (unlike part (i)) $\lam=\theta^{-1}=\alpha+i\beta$ is fixed. Write $\theta^n t = x_n+i y_n$, $\theta^n ut=x'_n+i y'_n$. A straightforward calculation shows that
\begin{align*}
y_{n+1} &= \beta^{-1}(\alpha x_{n+1}-x_n),\\
y'_{n+1} &= \beta^{-1}(\alpha x'_{n+1}-x'_n),
\end{align*}
whence
\[
u = \frac{x'_{n+1}+i y'_{n+1}}{x_{n+1} + i y_{n+1}} = \frac{x'_{n+1}+i\beta^{-1}(\alpha x'_{n+1}-x'_n) }{x_{n+1}+ i \beta^{-1}(\alpha x_{n+1}-x_n)}.
\]
From here another calculation similar to, but easier than, the one in Lemma \ref{lem-theta} yields that
\be \label{eq:EK-cover}
\left| u- \frac{L_{n+1}+i\beta^{-1}(\alpha L_{n+1}-L_n)}{K_{n+1}+i\beta^{-1}(\alpha K_{n+1}-K_n)}\right| \le C_2 |\theta|^{-n}
\ee
where $C_2>0$ depends on $\theta$ and $r$ only. Hence we are left to count the number of possible sequences $(K_i,L_i)_{i=1}^{n+2}$ for which $u$ is in the set in the statement of the lemma.

Also, we have
\[
x_{n+2} =\Re(\lam^{-1}(x_{n+1}+i y_{n+1})) =  |\theta|^2(\alpha x_{n+1}+\beta y_{n+1})= |\theta|^2(2\alpha x_{n+1}-x_n),
\]
and likewise for $x'_{n+2}$, which imply that
\begin{align*}
|K_{n+2} - |\theta|^2(2\alpha K_{n+1}-K_n)| &\le C_3\max(|\eps_n|,|\eps_{n+1}|),\\
|L_{n+2} - |\theta|^2(2\alpha L_{n+1}-L_n)| &\le C_3\max(|\delta_n|,|\delta_{n+1}|).
\end{align*}
Let $\rho:=(2C_3)^{-1}$, $M:=(2C_3+1)^2$. Similarly to Lemma \ref{lem-immed}, we see that, fixing $u\in H_r$ and $t\in\C$ with $|t|\in [1,\theta]$,
\begin{enumerate}
\item[(a)] If $\max(|\eps_n|,|\eps_{n+1}|,|\delta_n|,|\delta_{n+1}|)<\rho$, then $(K_{n+2},L_{n+2})$ is uniquely determined by $K_{n},L_{n},K_{n+1},L_{n+1}$.
\item[(b)] Given $K_{n},L_{n},K_{n+1},L_{n+1}$, there are at most $M$ possibilities for $(K_{n+2},L_{n+2})$.
\end{enumerate}
From here, we can  count the number of possible sequences $(K_i,L_i)_{i=1}^{n+2}$ (corresponding to points $u$ in the set in question), using an argument nearly identical to that used to finish the proof of Proposition \ref{prop:EK-combinatorial}. Together with \eqref{eq:EK-cover}, this concludes the proof.
\end{proof}

Finally, we give the short proof of the last claim of Theorem \ref{mainthm:E-K}, using the original argument of Erd\H{o}s \cite{Erdos40} and Kahane \cite{Kahane71}.
\begin{proof}[Proof of Theorem \ref{mainthm:E-K}(iii)]
  Fix $k\in\N$ and $\e,\eta>0$. Let $H:=H_{2,4,\eta/5}$ (the choice of $2$ and $4$ is arbitrary; $5$ is a sufficiently large constant).  By the first part of Theorem \ref{mainthm:E-K} there are $C,\gamma>0$ such that $|\widehat{\nu_{\lam,\bba}^p}(\xi)| \le C\, |\xi|^{-\gamma}$ whenever $\lam^{-1}\in H\setminus \mathcal{E}$, where $\dim_H(\mathcal{E})<\e$. For any $\ell\in\N$, we have a decomposition
\[
\nu_{\lam,\bba}^p = \nu_{\lam^\ell,\bba}^p * S_\lam\nu_{\lam^\ell,\bba}^p * \cdots * S_{\lam^{\ell-1}} \nu_{\lam^\ell,\bba}^p,
\]
where we recall that $S_u(z)=u z$. This well-known fact can be seen e.g.\ from expressing $\nu_{\lam,\bba}^p$ as an infinite convolution. Hence, if $\lam^{-\ell} \in H\setminus \mathcal{E}$, then
\[
|\widehat{\nu_{\lam,\bba}^p}(\xi)| \le C(\lam,\ell)\,|\xi|^{-\ell \gamma}.
\]
In particular, if $\ell\gamma>k+2$, then $\nu_{\lam,\bba}^p$ has a density in $C^k$ (see e.g. \cite[Proposition 3.2.12]{Grafakos09}). Pick $\ell_0\in\N$ such that $\ell_0\gamma>k+2$ and $\bigcup_{\ell=\ell_0+1}^\infty (4^{-1/\ell},3^{-1/\ell})=:(1-\delta,1)$ for some $\delta\in (0,1/3)$. Finally, set
\[
E = \bigcup_{\ell\ge \ell_0} \{ \lam:\lam^{-\ell}\in \mathcal{E}\}.
\]
Suppose $|\lam|\in (1-\delta,1)$ and $|\Im(\lam)|>\eta$. Then $\lam^{-\ell}\in [2,4]$ for two consecutive values of $\ell$. A short calculation shows that, for one of these two values, we also have $\Im(\lam^{-\ell})>\eta$, so $\lam^{-\ell}\in H$. The claim is now clear.
\end{proof}

\section{Absolute continuity: Proofs of Theorems \ref{mainthm:complex-SSM} and \ref{mainthm:convolutions}}

\subsection{Hochman's results on super-exponential concentration}
\label{subsec:hochman}
Here we recall a recent result of Hochman that will play a central role in the proof of our main theorems. We state only the special case we will require.
It is well known that $\dim \nu_{\lam,\mathbf{a}}^p\le \min(s(\lam,p),2)$ and equality is expected to ``typically'' hold. Hochman's results provide some very weak conditions under which equality indeed does hold. Given an IFS $( z\mapsto \lambda z + a_i )_{i\in\Lambda}$ with $\lam\in\D$, $a_i\in\C$,  let
\[
\Delta_n(\lam,\textbf{a}) = \min_{i\neq j\in\Lambda^n} \left|\sum_{k=0}^{n-1} \lambda^k a_{i_{k+1}} - \sum_{k=0}^{n-1} \lambda^k a_{j_{k+1}}\right|.
\]
This represents the closest distance between $n$-level cylinders coming from different words. It is easy to see that $\Delta_n(\lam,\mathbf{a})$ is decreasing, tends to $0$ at least exponentially fast, and $\Delta_n(\lam,\mathbf{a})=0$ for some $n$ if and only if there is an exact overlap. Hochman's Theorem asserts that, provided $\lam$ is non-real, there is no dimension drop unless the convergence of $\Delta_n(\lam,\mathbf{a})$ to zero is super-exponential:
\begin{thm} \label{thm:hochman}
Let $( z\mapsto \lambda z + a_i )_{i\in\Lambda}$ be an IFS as above, and let $(p_i)_{i\in\Lambda}$ be a probability vector. Then one of the following three alternatives hold:
\begin{enumerate}[(i)]
\item $\dim\nu_{\lam,\mathbf{a}}^p = \min\{2,s(\lam,p)\}$,
\item $\Delta_n(\lam,\mathbf{a})\to 0$ super-exponentially (i.e. $\log \Delta_n(\lam,\mathbf{a})/n\to- \infty$).
\item $\lambda\in\R$.
\end{enumerate}
\end{thm}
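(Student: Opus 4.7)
My plan is to adapt Hochman's entropy-increase framework, originally developed for self-similar measures on $\R$, to the planar setting $\C \cong \R^2$. The cleanest route is to reduce the statement to a higher-dimensional version of Hochman's inverse theorem for the entropy of convolutions, combined with a careful analysis of why the non-trivial invariant-subspace alternative collapses precisely to $\lam \in \R$.

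First I would introduce the scale-$n$ entropy $H_n(\mu) = H(\mu,\mathcal{D}_n)$, where $\mathcal{D}_n$ is the dyadic partition of $\C$ at scale $2^{-n}$, and set up the random-walk representation of $\nu_{\lam,\bba}^p$ as an infinite convolution $\nu_{\lam,\bba}^p = *_{k=0}^\infty S_{\lam^k}\mu_0$, where $\mu_0 = \sum p_i \delta_{a_i}$ and $S_u(z)=uz$. Under the assumption $\dim \nu_{\lam,\bba}^p < \min\{2,s(\lam,p)\}$, the entropy $H_n(\nu_{\lam,\bba}^p)/n$ has a limit strictly less than both $2$ and $s(\lam,p)$. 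Splitting the product into a ``head'' that is a convex combination of rescaled cylinder measures and a ``tail'' that is a rescaled copy of $\nu_{\lam,\bba}^p$, one obtains, as in \cite[\S5]{Hochman15}, a supermultiplicativity-type inequality for the defect between $H_n/n$ and $s(\lam,p)$. The substantive input is then Hochman's inverse theorem: if $\mu$ is a probability measure on $\R^2$ and its self-convolution does not gain the expected amount of scale-$n$ entropy, then at most scales $k\le n$, $\mu$ is concentrated on small neighbourhoods of a proper affine subspace (a point or a line).

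Feeding this into the random-walk decomposition, I would show that the only way to have a dimension drop without super-exponential concentration (alternative (ii)) is that at almost every scale $k$, the distribution of the level-$k$ words $i \in \Lambda^k$ is concentrated near an affine line $\ell_k \subset \C$. The rotational/scaling action of multiplication by $\lam$ then forces compatibility between the lines $\ell_k$ at different scales. Here is where $\lam \in \R$ versus $\lam \notin \R$ enters decisively: if $\arg(\lam) \notin \pi\Q$, the iterates of multiplication-by-$\lam$ rotate any given line irrationally, so no single direction can be invariant across scales unless the support collapses to a point; even for rational $\arg(\lam)\notin \pi\Z$, a short pigeonhole argument at a well-chosen scale contradicts the concentration on a moving line. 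Thus saturation along affine subspaces cannot persist, and one concludes that the concentration is actually super-exponential in the sense $\log \Delta_n/n \to -\infty$. By contrast, when $\lam \in \R$ the common real axis (and its translates) is preserved by all the linear parts, and the above obstruction disappears, giving alternative (iii).

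The main obstacle is the inverse-theorem step in $\R^2$: one must propagate ``entropy not increasing under convolution'' to ``concentration near a proper affine subspace at many scales,'' which is much more delicate than the one-dimensional analogue because one has to rule out a hierarchy of intermediate affine behaviours (concentration on lines of varying slope at different scales). The cleanest way I see to handle this is to quote Hochman's planar inverse theorem directly and spend the bulk of the argument showing that its ``lines at many scales'' conclusion is incompatible, under the $\lam$-self-similarity, with $\lam \notin \R$; this incompatibility, rather than the entropy analysis itself, is the genuinely new input needed over the real-line case.
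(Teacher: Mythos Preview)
The paper does not prove this statement; it is quoted verbatim as a special case of \cite[Theorem 1.5]{Hochman15}, with no further argument. So there is no in-paper proof to compare against beyond a one-line citation.

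What you have sketched is essentially a blueprint for reproving Hochman's $\R^d$ theorem in the case $d=2$: multiscale entropy, the infinite-convolution decomposition, the inverse theorem for entropy of convolutions in the plane, and then elimination of the saturation alternative. The architecture is correct, and this is indeed how \cite{Hochman15} proceeds, but carrying the inverse-theorem step out rigorously is the content of a long and technically demanding paper; it is far more than the present paper asks for or supplies.

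The one place where your sketch is more involved than necessary is the final step. Once Hochman's trichotomy is granted, the saturation alternative in \cite[Theorem 1.5]{Hochman15} requires a proper subspace $V\subset\R^2$ invariant under the linear parts of the IFS, here the single map ``multiplication by $\lam$''. The real $2\times 2$ matrix representing that map has eigenvalues $\lam,\overline{\lam}$; when $\lam\notin\R$ these are non-real, so the matrix has no real eigenvector and hence no proper invariant subspace. The saturation alternative is therefore vacuous as soon as $\lam\notin\R$, with no need to distinguish whether $\arg(\lam)/\pi$ is rational. Your scale-dependent family of lines $\ell_k$ and the rotation/pigeonhole argument are closer to how the saturation conclusion is \emph{derived} inside Hochman's proof than to what is needed to \emph{apply} his theorem; for the application, the obstruction is a one-line linear-algebra fact, not a dynamical one.
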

When (ii) holds, we say that there is \emph{super-exponential concentration of cylinders}. This theorem is a special case of \cite[Theorem 1.5]{Hochman15}. Using this result, Hochman proved that in very general parametrized families of self-similar measures, Hausdorff and similarity dimensions coincide outside of a set of packing dimension $\ell-1$, where $\ell$ is the dimension of the parameter space, see \cite[Theorem 1.10]{Hochman15}. This is not enough for our purposes, so we appeal to arguments specific to our situation.

\begin{prop} \label{prop:zero-dim-scc}
Let $\mathbf{a} = (a_i)_{i\in\Lambda}\subset \C^\Lambda$, $\#\Lambda\ge 2$ with all $a_i$ different. Then
\be \label{eq-dim1}
\dim_P\left(\{ \lam\in\D: \log\Delta_n(\lam,\mathbf{a})/n\to-\infty\}\right)=0,
\ee
where $\dim_P$ denotes packing dimension. Thus,
\be
\dim_P\left( \left\{\lam\in \D\setminus \R:\ \exists\,p\in\PP_\Lambda,\ \dim(\nu_{\lam,\mathbf{a}}^p) < \min\{2, s(\lam,p)\}\right\}\right) = 0.
\ee
\end{prop}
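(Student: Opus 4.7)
The plan has two ingredients. First, I would note that the second statement reduces to the first via Hochman's trichotomy in Theorem~\ref{thm:hochman}: if $\lam \in \D\setminus\R$ and $\dim\nu_{\lam,\mathbf{a}}^p < \min\{2,s(\lam,p)\}$ for some $p \in \PP_\Lambda$, then alternatives~(i) and~(iii) both fail, so alternative~(ii) --- super-exponential concentration of cylinders --- must hold. Thus the set in the second display is contained in the set in \eqref{eq-dim1}, and the dimension bound transfers.

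For the first statement, set $S := \{\lam\in\D : \log\Delta_n(\lam,\mathbf{a})/n \to -\infty\}$. The key observation is that for each $c>0$, $S \subseteq \bigcup_{N\ge 1} F_{N,c}$, where $F_{N,c} := \bigcap_{n\ge N}\{\lam\in\D : \Delta_n(\lam,\mathbf{a}) \le e^{-cn}\}$ is closed in $\D$. By countable stability of packing dimension, it will suffice to show $\dim_P F_{N,c} \le \psi(c)$ uniformly in $N$, with $\psi(c)\to 0$ as $c\to\infty$. My starting point is the decomposition $\{\Delta_n \le e^{-cn}\} = \bigcup_{i\ne j\in\Lambda^n}\{\lam : |P_{i,j}(\lam)| \le e^{-cn}\}$, where $P_{i,j}(\lam) := \sum_{k=0}^{n-1}(a_{i_{k+1}}-a_{j_{k+1}})\lam^k$ has degree $d_{i,j}\le n-1$ and leading coefficient of modulus at least $\delta := \min_{i\ne j}|a_i-a_j|>0$, since all $a_i$ are distinct. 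Cartan's estimate then covers each sublevel set $\{|P_{i,j}|\le e^{-cn}\}$ by at most $d_{i,j}$ closed disks with sum of radii bounded by $2e(e^{-cn}/\delta)^{1/d_{i,j}}$. Passing to distinct polynomials instead of all pairs (there are at most $m^{2(d+1)}$ of degree $d$, since each coefficient lies in a set of size $\le m^2$) streamlines the union bound by removing an $m^n$ factor.

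The hard part will be that for polynomials of degree $d \sim n$, the individual-disk radii provided by Cartan are only of order $e^{-c}$, independent of $n$, while exponentially many such polynomials remain. A single-level union bound therefore yields only $\overline{\dim}_B\{\Delta_n \le e^{-cn}\} \le 2$, which is useless. To overcome this I would exploit the intersection structure defining $F_{N,c}$: a point there must lie in a Cartan disk at \emph{every} level $n\ge N$, not merely one. Concretely, I plan to build a nested, multiscale cover --- inside each Cartan disk $D_n$ at level $n$, restrict the polynomials $P_{i',j'}$ at level $n'>n$ to $D_n$ and reapply Cartan on $D_n$, using Schwarz-type norm bounds on polynomials restricted to $D_n$ to force genuine geometric contraction from one level to the next. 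Controlling the branching factor of the resulting tree (bounded combinatorially by the finite alphabet of coefficient differences, together with the degree count above) and summing the $s$-content of the leaves over scales should yield, after optimizing the scale/level trade-off, a bound $\dim_P F_{N,c} \le K(m,\mathbf{a})/c$. Sending $c \to \infty$ then gives $\dim_P S = 0$, which is \eqref{eq-dim1}, completing the proof.
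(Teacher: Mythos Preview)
Your reduction of the second claim to the first via Theorem~\ref{thm:hochman} is exactly right, and your setup with the polynomials $P_{i,j}$ and Cartan's lemma is the natural start. You also correctly isolate the obstruction: for $d_{i,j}\sim n$ the Cartan radii are only $O(e^{-c})$, while there are $\sim m^{2n}$ polynomials, so a single-scale union bound is vacuous.

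The gap is in your proposed fix. The multiscale nested-Cartan scheme you sketch does not, as stated, produce geometric contraction. Concretely: if $D$ is a disk of radius $r_0$ centred at $z_0$ and you reparametrize $z=z_0+r_0 w$, the polynomial $P_{i',j'}$ of degree $d'$ becomes a polynomial in $w$ with leading coefficient $a_{d'} r_0^{d'}$; Cartan then covers $\{w:|P_{i',j'}(z_0+r_0 w)|\le e^{-cn'}\}$ by $\le d'$ disks of total $w$-radius $\le 2e\bigl(e^{-cn'}/(\delta r_0^{d'})\bigr)^{1/d'}$, and after scaling back the factor $r_0$ cancels. So restricting to a smaller disk yields the \emph{same} Cartan bound as on all of $\C$, and no contraction is gained from the nesting. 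The vague appeal to ``Schwarz-type norm bounds'' does not repair this, because a lower bound on $\sup_D|P|$ together with a Remez-type inequality controls the \emph{measure} of the sublevel set but still leaves the degree $d'\sim n'$ governing any covering by disks.

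What you are missing is a structural fact specific to this family of polynomials: their coefficients lie in the fixed finite set $\Dk=\{a_i-a_j\}$, so the associated power series form a normal family on $\D$, and hence on any compact annulus $\mathbb{A}_{\rho,r}$ each such nonzero polynomial has at most $k_r$ zeros, with $k_r$ depending only on $r$ and \emph{not} on $n$. Factoring out the zeros of modulus $>(1+r)/2$ (each contributing a factor $\ge(1-r)/2$ on $\mathbb{A}_{\rho,r}$) and the possible power of $z$ at the origin, one gets that $\{\lam\in\mathbb{A}_{\rho,r}:|P_{i,j}(\lam)|<\eps^n\}$ is covered by at most $k_r$ disks of radius $O\bigl((\eps/\rho(1-r))^{n/k_r}\bigr)$. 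Now the radii shrink exponentially in $n$ with fixed exponent $1/k_r$, the single-level union over $\le(\#\Lambda)^{2n}$ polynomials gives $\overline{\dim}_B\le 2k_r\log(\#\Lambda)/\log\bigl(\rho(1-r)/(2\eps)\bigr)$, and letting $\eps\to 0$ (your $c\to\infty$) finishes the proof without any multiscale argument.
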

\begin{proof} The second statement follows from the first and Theorem \ref{thm:hochman}; here we use that $\lam\notin\R$. It suffices to show (\ref{eq-dim1}) with $\D$ replaced by
$$
\mathbb{A}_{\rho,r}:= \{\lam:\ \rho< |\lam|<r\}
$$
for any fixed $0 < \rho< r<1$.

Write $\Ak=\{ a_i:i\in\Lambda\}$. For  $u,v\in \Ak^n$, let
$$
\Delta_{u,v}(\lam) :=  \sum_{j=0}^{n-1} (u_j - v_j) \lam^j.
$$
We consider $\Delta_{u,v}$ as functions $\mathbb{A}_{r,\rho}\to \C$. Note that $\Delta_n(\lam,\mathbf{a})=\min_{i\neq j\in\Ak^n} |\Delta_{u,v}(\lam)|$. Hence, it suffices to show that $\dim_P(E)=0$, where
$$
E = \bigcap_{\eps>0} \bigcup_{N=1}^\infty \bigcap_{n> N} \bigcup_{u\ne v\in \Ak^n} \Delta_{u,v}^{-1}(B_{\eps^n}(0)).
$$
Let $\Dk = \Ak-\Ak$. Clearly, $\Delta_{u,v}(\lam)$ is a polynomial of degree $\le n-1$ in $\lam$, with
coefficients in $\Dk$.

\begin{lemma} \label{lem1}
Let $\Dk$ be a finite subset of $\C$, with $0\in \Dk$, and $d_* = \min(|a|:\,a\in \Dk\setminus \{0\})$.
Then for every $r<1$ there exists $k_r \ge 1$ such that for any polynomial $p$ with coefficients in $\Dk$ of degree $\le n$, any $\rho>0$, and any $\eps>0$, the set $\{\lam\in \mathbb{A}_{\rho,r}:\ |p(\lam)| < \eps^n\}$ may be covered by $k_r$ disks of
radius $d_*^{-1/k_r} (\frac{2\eps}{\rho(1-r)})^{n/k_r}$.
\end{lemma}

We deduce the proposition first.
It follows from the lemma that the set
$$
E_{\eps,n}:= \bigcup_{u\ne v \in \Ak^n} \Delta_{u,v}^{-1}(B_{\eps^n}(0))
$$
may be covered by $k_r (\# \Ak)^{2n}$ disks of diameter $C(\frac{2\eps}{\rho(1-r)})^{n/k_r}$, hence
$$
\dim_P\left(\bigcap_{n>N} E_{\eps,n}\right) \le \ov{\dim}_B\left(\bigcap_{n>N} E_{\eps,n}\right) \le \frac{2k_r\log(\#\Ak) }{-\log(\frac{2\eps}{\rho(1-r)})}
$$
for all $N$, where $\ov{\dim}_B$ denotes upper box-counting (or Minkowski) dimension. Therefore,
$$
\dim_P\left(\bigcup_{N=1}^\infty \bigcap_{n> N}  \bigcup_{u\ne v\in \Ak^n} \Delta_{u,v}^{-1}(B_{\eps^n}(0))\right) \le \frac{2k_r\log(\#\Ak) }{-\log(\frac{2\eps}{\rho(1-r)})}\,,
$$
and since the latter tends to zero as $\eps\to 0$, the desired claim follows.
\end{proof}

\medskip

\begin{proof}[Proof of Lemma~\ref{lem1}]
Let
$$
\Bk_\Dk:= \Bigl\{\sum_{j=0}^\infty a_j z^j:\ a_j \in \Dk,\ a_0\ne 0 \Bigr\}
$$
be the set of power series with coefficients in $\Dk$, non-vanishing at zero.
Fix $r<1$.
Observe that $\Bk_\Dk$ is a normal family in the open unit disk $\D$, hence it is compact on any compact subset of $\D$.  Therefore, there exists $k_r\ge 1$ such that the number of zeros of any function from $\Bk_D$ in the closed disk $\ov{B}_{(r+1)/2}(0)$, counting with multiplicities, is at most $k_r$:
\be \label{eq:bounded-zeros}
\forall f\in \Bk_\Dk,\ \ \ \ \ \#\{z:\ |z| \le (r+1)/2,\ f(z)=0\}\le k_r.
\ee
Indeed, otherwise a subsequential limit in $\Bk_\Dk$ (which is not constant zero by the definition of $\Bk_\Dk$) would have infinitely many zeros in $\ov{B}_{(r+1)/2}(0)$. (In fact, an explicit estimate for $k_r$ in terms of coefficient bounds is given in \cite[Theorem 2]{BBBP}, but for us this is unimportant.)

Let $p$ be a polynomial of degree $\le n$ with coefficients in $\Dk$. Then we have $p(z) = z^s q(z)$, where
$q\in \Bk_\Dk$ does not vanish at $0$. We can write
$$
q(z) = a(z-z_1)\cdots (z-z_\ell),
$$
where $a\in \Dk$ and $z_1,\ldots,z_\ell\ne 0$ are all the zeros of $q$, counted with multiplicities. Thus,
$$
|p(z)| \ge d_*\cdot |z|^s \cdot\prod_{|z_j|\le \frac{1+r}{2}} |z-z_j|\cdot\prod_{|z_j|> \frac{1+r}{2}} |z-z_j|.
$$
Therefore,
\begin{eqnarray*}
\lam\in \mathbb{A}_{\rho,r},\ |p(\lam)| \le \eps^n\ \Longrightarrow\ \prod_{|z_j|\le \frac{1+r}{2}} |\lam-z_j|  & \le &
d_*^{-1} |\lam|^{-s} \prod_{|z_j|> \frac{1+r}{2}} |\lam-z_j|^{-1}\cdot\eps^n \\
& \le & d_*^{-1} \rho^{-n} \left(\frac{1-r}{2}\right)^{-n}\eps^{n}.
\end{eqnarray*}
In view of \eqref{eq:bounded-zeros}, $\lam\in \mathbb{A}_{\rho,r},\ |p(\lam)| \le \eps^n$, implies
$$
\min\{|\lam-z_j|:\ |z_j| \le (1+r)/2\} \le d_*^{-1/k_r}\left(\frac{2\eps}{\rho(1-r)}\right)^{n/k_r},
$$
and the claim of the lemma follows.
\end{proof}

We have a similar, but easier, result in the setting of Theorem \ref{mainthm:convolutions}.
\begin{prop} \label{prop:zero-dim-scc-conv}
Fix $\lam\in\DmR$ and $\mathbf{a} = (a_i)_{i\in\Lambda}\subset \C^\Lambda$,\ $\#\Lambda\ge 2$, such that $(z\mapsto \lambda z+a_i)_{i\in\Lambda}$ has no super-exponential concentration of cylinders. Then
\be \label{eq:dimP-translations}
\dim_P\left( \left\{u\in\C:\ \exists\,p\in\PP_\Lambda,\ \dim\left(\nu_{\lam,\mathbf{a}}^p*S_u\nu_{\lam,\mathbf{a}}^p\right) < \min\{2, 2 s(\lam,p)\}\right\}\right) = 0.
\ee
\end{prop}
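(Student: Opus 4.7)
The plan is to realize the convolution as itself a self-similar measure and apply Hochman's Theorem~\ref{thm:hochman}. The random-sum representation shows that $\nu_{\lam,\mathbf{a}}^p * S_u \nu_{\lam,\mathbf{a}}^p$ is the self-similar measure for the IFS $(z\mapsto \lam z+a_i+u a_j)_{(i,j)\in\Lambda^2}$ with weights $(p_ip_j)$, whose similarity dimension equals $2s(\lam,p)$. Setting $\mathbf{b}(u):=(a_i+ua_j)_{(i,j)}$ and using $\lam\notin\R$ to exclude alternative (iii) of Theorem~\ref{thm:hochman}, the proposition reduces to showing that the set
\[
E := \{u\in\C:\ \log\Delta_n(\lam,\mathbf{b}(u))/n\to -\infty\}
\]
has zero packing dimension, since the super-exponential concentration condition is a property of the IFS alone, independent of $p$.

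The next step is to observe that, for distinct pairs $(\mathbf{i},\mathbf{j})\ne(\mathbf{i}',\mathbf{j}')$ in $(\Lambda^2)^n$, the difference between the two cylinder translations is \emph{affine} in $u$: it equals $P+uQ$, where $P=\sum_{k=0}^{n-1}\lam^k(a_{i_{k+1}}-a_{i'_{k+1}})$ and $Q=\sum_{k=0}^{n-1}\lam^k(a_{j_{k+1}}-a_{j'_{k+1}})$. The hypothesis that $(\lam,\mathbf{a})$ has no super-exponential concentration then supplies a constant $C>0$ and a subsequence $n_k\to\infty$ with $\Delta_{n_k}(\lam,\mathbf{a})\ge e^{-Cn_k}$, yielding uniform lower bounds $|P|\ge e^{-Cn_k}$ and $|Q|\ge e^{-Cn_k}$ whenever the relevant words differ.

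For any fixed $M>C$ and $R>0$, I would then cover $E\cap B(0,R)$ as follows. A point $u\in E$ must, for each sufficiently large $k$, admit a distinct pair with $|P+uQ|\le e^{-Mn_k}$. The case $\mathbf{j}=\mathbf{j}'$ is ruled out because then $Q=0$ would force $|P|\le e^{-Mn_k}$, contradicting $|P|\ge e^{-Cn_k}$; hence $\mathbf{j}\ne\mathbf{j}'$, $|Q|\ge e^{-Cn_k}$, and $u$ lies in a disk of radius $e^{-(M-C)n_k}$ centered at $-P/Q$. Since there are at most $(\#\Lambda)^{4n_k}$ such pairs, the set $B_{k,M}:=\{u\in B(0,R):\Delta_{n_k}(\lam,\mathbf{b}(u))\le e^{-Mn_k}\}$ is covered by $(\#\Lambda)^{4n_k}$ disks of radius $e^{-(M-C)n_k}$, giving $\overline{\dim}_B\left(\bigcap_{k\ge k_0}B_{k,M}\right)\le 4\log(\#\Lambda)/(M-C)$. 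Since $E\cap B(0,R)\subseteq \bigcup_{k_0}\bigcap_{k\ge k_0}B_{k,M}$, the $\sigma$-stability of $\dim_P$ yields $\dim_P(E\cap B(0,R))\le 4\log(\#\Lambda)/(M-C)$; sending $M\to\infty$ and $R\to\infty$ then gives $\dim_P E=0$.

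The main (mild) obstacle will be setting up the correspondence between the convolution IFS and the underlying one carefully enough that the no-SEC hypothesis on $(\lam,\mathbf{a})$ cleanly translates into the lower bound on $|Q|$ along the subsequence $(n_k)$; the covering count itself is actually simpler than the one for Proposition~\ref{prop:zero-dim-scc}, since $u$ appears linearly in each difference and no analog of Lemma~\ref{lem1} (no normal-families argument) is needed.
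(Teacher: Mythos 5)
Your argument is, in substance, the paper's: realize $\nu_{\lam,\mathbf{a}}^p*S_u\nu_{\lam,\mathbf{a}}^p$ as the self-similar measure of the product IFS with translations $a_i+ua_j$, note that each cylinder difference is affine in $u$, namely $P+uQ$ with $P,Q$ values at $\lam$ of polynomials with coefficients in $\Ak-\Ak$, use the non-degeneracy of the original IFS to force $Q\neq 0$ and to bound $|Q|$ from below, and cover the exceptional set by small disks around $-P/Q$. The reduction via Theorem \ref{thm:hochman}, the count $(\#\Lambda)^{4n}$, and the shape of the final dimension estimate all coincide with the paper's proof.

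The one genuine issue is your use of a subsequence. From the no-super-exponential-concentration hypothesis you extract only $\Delta_{n_k}(\lam,\mathbf{a})\ge e^{-Cn_k}$ along some $n_k\to\infty$, and you then cover $\bigcap_{k\ge k_0}B_{k,M}$ only at the scales $r_k=e^{-(M-C)n_k}$. If $(n_k)$ is sparse (say $n_{k+1}/n_k\to\infty$), covers at these scales control the \emph{lower} box dimension but not the upper one: for a scale $\delta$ just below $r_k$ the best available covering number is the one at scale $r_{k+1}$, namely $(\#\Lambda)^{4n_{k+1}}$, and $4n_{k+1}\log(\#\Lambda)/\bigl((M-C)n_k\bigr)$ need not be small. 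Since $\dim_P$ is controlled by \emph{upper} box dimension under countable unions, your argument as written yields $\dim_H(E)=0$ (which is all that Theorem \ref{mainthm:convolutions} actually needs) but not the asserted $\dim_P(E)=0$. The paper instead uses the hypothesis in the uniform form: there is $c>0$ with $\Delta_n(\lam,\mathbf{a})\ge c^n$ for \emph{every} $n$, hence $|Q|\ge c^n$ for every nonzero admissible $Q$ of degree $<n$; this is how the no-concentration condition is used throughout the paper and is exactly what the open set condition delivers in the application. With that, one gets covers by $(\#\Lambda)^{4n}$ disks of radius $(\e/c)^n$ for every $n>N$, hence a genuine upper-box bound $4\log(\#\Lambda)/(-\log(\e/c))$ valid at all scales. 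To close the gap, either adopt the hypothesis in this ``for all $n$'' form or supply covers at all intermediate scales.
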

\begin{proof}
The measure $\nu_{\lam,\mathbf{a}}^p*S_u\nu_{\lam,\mathbf{a}}^p$ is the attractor of the IFS $( \lambda z+ b_{ij}(u))_{{ij}\in\Lambda^2}$ with weight $q\in\PP_{\Lambda^2}$, where $b_{ij}(u) = a_i+u a_j$ and $q_{ij}=p_i p_j$. Write $\Ak=\{a_i:i\in\Lambda\}$.

Notice that
\[
\Delta_n(\lam,\mathbf{b})=\min_{v,v',w,w'\in\Ak^n:(v,w)\neq (v',w')} \left|  \sum_{i=0}^{n-1} (v_i-v'_i) \lam^i + u (w_i-w'_i) \lam^i \right|.
\]
Denote $\Dk=\Ak-\Ak$. We can rewrite the above as
\[
\Delta_n(\lam,\mathbf{b})= \min_{p,q\in\mathcal{Q}_n: p\neq 0 \text{ or }q\neq 0}  |p(\lam)+u q(\lam)|,
\]
where $\mathcal{Q}_n$ is the family of polynomials with coefficients in $\Dk$ of degree at most $n-1$.  Hence, in light of Theorem \ref{thm:hochman}, it suffices to show that $\dim_P(E)=0$, where
\[
E = \bigcap_{\eps>0} \bigcup_{N=1}^\infty \left( \bigcap_{n> N}  \bigcup_{p,q\in\mathcal{Q}_n, p\neq 0 \text{ or }q\neq 0} \{ u: |p(\lam)+u q(\lam)|<\e^n\}\right) =: \bigcap_{\e>0}\bigcup_{N=1}^\infty E_{\e,N}.
\]
Note that
\[
\Delta_n(\lam,\mathbf{a}) = \min_{p\in\mathcal{Q}_n, p \neq 0} |p(\lam)|.
\]
Since, by assumption, $\Delta_n(\lam,\mathbf{a})$ does not have super-exponential decay, there exists $c>0$ such that $|p(\lam)|\ge c^n$ for each nonzero $p\in\mathcal{Q}_n$, $n\in \N$. If $\e<c$, then $\{u: |p(\lam)+uq(\lam)|<\e^n\}$ is empty unless $q\neq 0$ and so, for any $n>N$,
\[
E_{\e,N} \subset\bigcup_{p,q\in\mathcal{Q}_n, q\neq 0} \{ u: |p(\lam)+u q(\lam)|<\e^n\} = \bigcup_{p,q\in\mathcal{Q}_n, q\neq 0} B_{\tfrac{\e^n}{|q(\lam)|}}(-\tfrac{p(\lam)}{q(\lam)}).
\]
This shows that $E_{\e,N}$ can be covered by $(\#\mathcal{Q}_n)^2\le (\#\Ak)^{4n}$ balls of radius $(\e/c)^n$. Since $n>N$ is arbitrary, this shows that
\[
\dim_P(E_{\e,N}) \le \ov{\dim}_B(E_{\e,N}) \le \frac{4\log|\Ak|}{-\log(\e/c)} \to 0 \text{ as }\e\to 0.
\]
This implies that $\dim_P(E)=0$, as desired.
\end{proof}
In particular, the proposition holds under the OSC, since this is well known to imply no super-exponential concentration of cylinders, see e.g. \cite[Proposition 1]{BandtGraf92}.

\subsection{Proofs of main results}

We can now complete the proofs of Theorems \ref{mainthm:complex-SSM} and \ref{mainthm:convolutions},  following closely the arguments of \cite{Shmerkin13} and \cite{ShmerkinSolomyak14}.

\begin{proof}[Proof of Theorem \ref{mainthm:complex-SSM}]
Let us decompose the supercritical parameter space
\[
P = \{ (\lam,p)\in \DmR\times\PP_\Lambda, h(p) > 2|\log\lam|\}
\]
into countably many pieces $P_k$, $k\ge 3$, such that
\[
\frac{h(p)}{|\log\lam|} > 2 + 4/k \quad\text{for all } (\lam,p)\in P_k.
\]
It is enough to show that for each $k$ there is an exceptional set $E_k$ of Hausdorff dimension $0$ such that if $(\lam,p)\in P_k\setminus E_k$, then $\nu_{\lam,\mathbf{a}}^p$ is absolutely continuous with a density in $L^q$ for some $q>1$. The theorem will then follow with $E=\bigcup_{k=1}^\infty E_k$. Hence, we fix $k$ for the rest of the proof.

Recall that $\nu_{\lam,\mathbf{a}}^p$ can be characterized as the distribution of the random sum $\sum_{n=0}^\infty \lambda^n X_n$, where the $X_n$ are i.i.d. with $\PP(X_n=a_i)=p_i$. It follows from this characterization that
\be \label{eq:conv}
\nu_{\lam,\mathbf{a}}^p = \mu_{\lam,\mathbf{a}}^p * \eta_{\lam,\mathbf{a}}^p,
\ee
where  $\eta_{\lam,\mathbf{a}}^p$ is the distribution of $\sum_{n:k\mid n} \lambda^n X_n$, while the measure $\mu_{\lam,\mathbf{a}}^p$ is the distribution of $\sum_{n:k\nmid n} \lambda^n X_n$. These measures are again self-similar; in fact, $\eta_{\lam,\mathbf{a}}^p=\nu_{\lam^k,\mathbf{a}}^p$, and $\mu_{\lam,\mathbf{a}}^p=\nu_{\lam^k, \mathbf{b}(\lam)}^q$, where $\mathbf{b}=(b_i)_{i\in\Lambda^{k-1}}$, $q\in\PP_{\Lambda^{k-1}}$ are given by
\begin{align*}
b_{i_1\ldots i_{k-1}} &=\sum_{j=0}^{k-2}  \lambda^j a_{i_{j+1}},\\
q_{i_1\ldots i_{k-1}} &= p_{i_1}\cdots p_{i_{k-1}}.
\end{align*}
Note that $\mathbf{b}$ depends on $\lam$. In particular, if $(\lam,p)\in P_k$, then
\[
s(\lam^k,q)=\frac{h(q)}{|\log\lam^k|}=\frac{(k-1)h(p)}{k|\log\lam|} = \left(1-\frac1k\right) s(\lam,p)>2.
\]
Let
\[
E'=\{ \lam\in\D: \Delta_n(\lam,\mathbf{a}) > c^n \text{ for some } c>0\}
\]
be the set of parameters for which there is no super-exponential concentration of cylinders. We know from Proposition \ref{prop:zero-dim-scc} that $\dim_H(E')=\dim_P(E')=0$. We claim that $\Delta_n(\lam^k,\mathbf{b})\ge \Delta_{kn}(\lam,\mathbf{a})$ for all $n$. This is because the IFS $(\lambda^k z + b_i)_{i\in\Lambda^{k-1}}$ is obtained by iterating $(\lambda z+a_i)_{i\in\Lambda}$ $k$ times and then deleting some maps. More precisely, for any pair $i,j\in (\Lambda^{k-1})^n$ we have
\[
\sum_{\ell=0}^{n-1} \lambda^{k\ell} b_{i_{\ell+1}} - \sum_{\ell=0}^{n-1} \lambda^{k\ell} b_{j_{\ell+1}} = \sum_{\ell=0}^{kn-1} \lambda^{\ell} a_{i'_{\ell+1}} - \sum_{\ell=0}^{kn-1} \lambda^\ell a_{j'_{\ell+1}},
\]
where $i'=(i_1 0 i_2 0 \cdots i_n 0)$ and likewise for $j'$, which gives the claim. It now follows that $\Delta_n(\lam^k,\mathbf{b})\ge (c^k)^n$ for all $n$ and $\lam\in\D\setminus E'$, and we deduce from Theorem \ref{thm:hochman} that if $\lam\in \D\setminus (E'\cup\R)$, then $\dim\mu_{\lam,\mathbf{a}}^p=2$.

We also know from Theorem \ref{mainthm:E-K}(i) that there is another set $E''$ of zero Hausdorff dimension, such that $|\widehat{\eta_{\lam,\mathbf{a}}^p}(\xi)|\le C\,|\xi|^{-\gamma}$ for $\lam\in\D\setminus E''$ and some $C,\gamma>0$ depending on $\lam$. We can now conclude from \eqref{eq:conv} and \cite[Corollary 5.5]{ShmerkinSolomyak14} that if $\lam\in P_k\setminus (E'\cup E'')$, then $\nu_{\lam,\mathbf{a}}^p$ is absolutely continuous with a density in $L^q$ for some $q>1$. This completes the proof.
\end{proof}

\begin{proof}[Proof of Theorem \ref{mainthm:convolutions}]
Once again, it is enough to prove that there is a set $E$ of zero Hausdorff dimension, such that the claim holds for all $u\in\C$ and $p\in P$, where
\[
P = \{ p\in\PP_\Lambda, s(\lam,p)>1+1/k\}.
\]
Recall the decomposition \eqref{eq:conv}. Since convolution is linear and commutative, we obtain
\be \label{eq:conv2}
\nu_{\lam,\mathbf{a}}^p * S_u \nu_{\lam,\mathbf{a}}^p = (\mu_{\lam,\mathbf{a}}^p * S_u \mu_{\lam,\mathbf{a}}^p ) * (\eta_{\lam,\mathbf{a}}^p * S_u \eta_{\lam,\mathbf{a}}^p).
\ee
Recall also from the proof of Theorem \ref{mainthm:complex-SSM} that $\mu_{\lam,\mathbf{a}}^p=\nu_{\lam^k,\mathbf{b}}^q$, where $s(\lam^k,q)=(1-1/k)s(\lam,p)$, and that this IFS has no super-exponential concentration of cylinders (since we assume this for $(\lambda z+a_i)_{i\in\Lambda})$. We can then apply Proposition \ref{prop:zero-dim-scc-conv} to obtain a set $E'\subset\C$ with $\dim_H(E')=0$, such that
\[
\dim(\mu_{\lam,\mathbf{a}}^p * S_u \mu_{\lam,\mathbf{a}}^p )=2 \quad\text{for all }u\in\C\setminus E',p\in P.
\]

On the other hand, since $\eta_{\lam,\mathbf{a}}=\nu_{\lam^k,\mathbf{a}}$, we have
\[
\eta_{\lam,\mathbf{a}}^p * S_u \eta_{\lam,\mathbf{a}}^p = \nu_{\lam,\mathbf{b}}^q,
\]
where $\mathbf{b}=(b_{ij}:= a_i + u a_j)_{ij\in\Lambda^2}$ and $q=(q_{ij}:=p_i p_j)_{ij\in\Lambda^2}$. Since $\#\Lambda\ge 2$ and all the $a_i$ are different, there are $i,j\in\Lambda$ such that
\[
-u=\frac{b_{i i}-b_{i j}}{b_{jj}-b_{ij}}.
\]
We can then apply Theorem \ref{mainthm:E-K}(ii) to obtain a set $E''\subset\C$ with $\dim_H(E'')=0$, such that the Fourier transform of $\eta_{\lam,\mathbf{a}}^p * S_u \eta_{\lam,\mathbf{a}}^p$ has power decay at infinity for every $u\in\C\setminus E''$.

Taking $E=E'\cup E''$, the proof is finished by virtue of  \eqref{eq:conv2} and \cite[Corollary 5.5]{ShmerkinSolomyak14}.

\end{proof}

\section{Further results}

\label{sec:generalizations}

\subsection{Fat Sierpi\'{n}ski gaskets}
\label{subsec:gasket}

Given $\lam\in (0,1$), the generalized Sierpi\'{n}ski gasket $A_\lam\subset\C$ is defined by the self-similarity relation
\[
A_\lam = \lam (A_\lam+a_1) \cup \lam(A_\lam+a_2) \cup \lam(A_\lam+a_3),
\]
where $a_i$ are the vertices of an equilateral triangle centered at the origin (if $a_i$ are arbitrary non-collinear points, one gets the same set up to an affine bijection). When $\lam\le 1/2$ the open set condition is satisfied ($\lam=1/2$ corresponds to the classical gasket), and several authors \cite{BMS04, Jordan05, JordanPollicott06, Hochman15} have studied the dimension, measure, and topology of $A_\lam$ for $\lam\in (1/2,1)$ (the ``fat'' regime). In particular, Hochman \cite[Theorem 1.16]{Hochman15} showed that
\[
\dim_H A_\lam = \min(\log 3/|\log\lam|,2)
\]
for $\lam$ outside of a set of zero Hausdorff and packing dimension. Jordan and Pollicott \cite[Theorem 2 and Example 1]{JordanPollicott06} proved that for a.e. $\lam$ in a certain interval of the supercritical region, $A_\lam$ has positive Lebesgue measure. This interval is different from the interval $(0.647...,1)$, for which Broomhead, Montaldi and Sidorov showed that $A_\lam$ has nonenmpty interior.

As a corollary of Theorem \ref{mainthm:complex-SSM}, we obtain:
\begin{cor}
Let $\nu_\lam = \nu_{\lam,\mathbf{a}}^p$ with $p=(1/3,1/3,1/3)$ be the natural self-similar measure on $A_\lam$. Then there is a set $E\subset (1/\sqrt{3},1)$ of zero Hausdorff dimension, such that $\nu_\lam$ is absolutely continuous and has a density in $L^q$ for some $q=q(\lam)>1$, for all $\lam\in (1/\sqrt{3},1)\setminus E$.

In particular, $\mathcal{L}(A_\lam)>0$ for all $\lam\in (1/\sqrt{3},1)\setminus E$.
\end{cor}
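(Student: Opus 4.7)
The plan is to mimic the proof of Theorem \ref{mainthm:complex-SSM}, adjusted for the fact that $\lam\in (1/\sqrt{3},1)\subset\R$, which is the real regime excluded by that theorem. I would first partition $(1/\sqrt{3},1)$ into countably many compact subintervals $I_k$, chosen so that $(1-1/k)\log 3/|\log\lam|>2$ on $I_k$, and on each $I_k$ employ the same $k$-fold decomposition
\[
\nu_\lam=\mu_{\lam,\mathbf{a}}^p*\eta_{\lam,\mathbf{a}}^p,\qquad \mu_{\lam,\mathbf{a}}^p=\nu_{\lam^k,\mathbf{b}(\lam)}^q,\quad \eta_{\lam,\mathbf{a}}^p=\nu_{\lam^k,\mathbf{a}}^p,
\]
so that $s(\lam^k,q)=(1-1/k)\log 3/|\log\lam|>2$ on $I_k$. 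If I can prove, outside a zero-Hausdorff-dimensional set of $\lam\in I_k$, that $\dim\mu_{\lam,\mathbf{a}}^p=2$ and that $\eta_{\lam,\mathbf{a}}^p$ has power Fourier decay, then \cite[Corollary 5.5]{ShmerkinSolomyak14} yields the desired $L^q$ density of $\nu_\lam$.

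The main obstacle is the full-dimension step, because the third alternative in Theorem \ref{thm:hochman} (namely $\lam\in\R$) holds for every $\lam$ in question, so the general argument used in the proof of Theorem \ref{mainthm:complex-SSM} breaks down. To bypass it I would invoke Hochman's fat-gasket result \cite[Theorem 1.16]{Hochman15}, which establishes $\dim_H A_\lam=\min(\log 3/|\log\lam|,2)$ outside a zero-dim set of real $\lam$ by exploiting the non-collinearity of the equilateral-triangle vertices to rule out the ``line saturation'' alternative in \cite[Theorem 1.5]{Hochman15}. The same argument adapts to the measure $\mu_{\lam,\mathbf{a}}^p$: its translations $b_{i_1\cdots i_{k-1}}=\sum_{j=0}^{k-2}\lam^j a_{i_{j+1}}$ still span $\R^2$ for any real $\lam\ne 0$ since the $a_i$ do, and super-exponential concentration of cylinders for the iterated IFS is excluded for $\lam$ outside a zero-dim set via Proposition \ref{prop:zero-dim-scc}, exactly as in the proof of Theorem \ref{mainthm:complex-SSM}.

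For the Fourier-decay step, Theorem \ref{mainthm:E-K}(i) is inapplicable because $\lam^k$ is real, so I would run an Erd\H{o}s--Kahane argument in the real-parameter setting. After an affine normalization so that $a_1=0$, $a_2=1$, $a_3=u$ with $\Im(u)\ne 0$, the product formula for the Fourier transform yields, for $\xi\in\C\setminus\{0\}$,
\[
|\widehat{\eta_{\lam,\mathbf{a}}^p}(\xi)|\le\prod_n\bigl(1-c_1\max(\|\lam^{kn}\Re(\bar\xi)\|,\|\lam^{kn}\Re(u\bar\xi)\|)^2\bigr),
\]
a real-parameter analog of \eqref{eq:EKdecay2}. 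Writing $\tilde\theta=\lam^{-k}>1$ and tracking the two integer sequences nearest to $\tilde\theta^n\Re(\bar\xi)$ and $\tilde\theta^n\Re(u\bar\xi)$, I would run the combinatorial counting argument of Proposition \ref{prop:EK-combinatorial2}; the recovery step is in fact easier here, since the single real parameter $\tilde\theta$ is determined by a ratio of two consecutive integer pairs, rather than by the four-coordinate inversion needed in the complex case of Lemma \ref{lem-tech}. The resulting zero-dim exceptional set, combined with the previous paragraph, completes the proof on $I_k$ and, by union over $k$, on all of $(1/\sqrt{3},1)$.
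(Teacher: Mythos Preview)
Your plan is workable in outline, but it misses a one-line reduction that makes the entire re-run of the machinery unnecessary, and one of your justifications is not correct as stated.

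The paper observes that, because the $a_i$ are the vertices of an equilateral triangle centred at the origin and the weights are uniform, $\nu_\lam$ is invariant under the rotation $R_\omega(z)=\omega z$ with $\omega=e^{2\pi i/3}$. Since $g_i:=\omega\lam z+a_i=f_i\circ R_\omega$, this invariance gives $\sum_i p_i\,g_i\nu_\lam=\sum_i p_i\,f_i\nu_\lam=\nu_\lam$, i.e.\ $\nu_\lam=\nu_{\omega\lam,\mathbf{a}}^p$. As $\omega\lam\notin\R$ for real $\lam$, Theorem~\ref{mainthm:complex-SSM} applies directly to the one-parameter family $\lam\mapsto\omega\lam$ and yields the zero-dimensional exceptional set in one stroke. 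No separate convolution decomposition, no real-parameter Erd\H{o}s--Kahane argument, and no direct invocation of Hochman in the real setting are needed.

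Your route re-derives the whole proof of Theorem~\ref{mainthm:complex-SSM} with real $\lam$, and the crucial full-dimension step is not justified as you wrote it: non-collinearity of the translations does \emph{not} by itself rule out the saturation alternative in \cite[Theorem~1.5]{Hochman15} when the contraction ratio is real (this is exactly the obstruction flagged in Remark~(3) following Theorem~\ref{mainthm:complex-SSM}). What makes \cite[Theorem~1.16]{Hochman15} go through for the gasket is precisely the rotational symmetry, used to replace $\lam$ by a non-real ratio---the very trick the paper applies to $\nu_\lam$ directly. So your appeal to Hochman's gasket theorem implicitly uses the same idea, only buried one level deeper and applied to the more complicated measure $\mu_{\lam,\mathbf{a}}^p$. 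Your Fourier-decay step is sound (indeed the classical one-sequence real Erd\H{o}s--Kahane argument already suffices, since $\max(|\Re(\bar\xi)|,|\Re(u\bar\xi)|)\gtrsim|\xi|$ when $\Im u\ne 0$), but it too becomes superfluous once the rotation trick is applied at the outset.
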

\begin{proof}
We cannot apply Theorem \ref{mainthm:complex-SSM} directly, since $\lam$ is real. However, $\nu_\lam$ and $A_\lam$ are invariant under a $\pi/3$ rotation around the origin, so we also have $\nu_\lam = \nu_{\omega\lam,\mathbf{a}}^p$ where $\omega=e^{i \pi/3}$, and the claim is now immediate from Theorem \ref{mainthm:complex-SSM}
\end{proof}

We make some further remarks.
\begin{rems}
\begin{enumerate}
\item The trick of realizing $A_\lam$ as the attractor of an IFS with rotations was already used in \cite{Hochman15}. Because of the need to have this rotation, the proof only applies to the natural self-similar measure (other self-similar measures on $A_\lam$ are not invariant under any rotations).
\item It is crucial that the exceptional set in Theorem \ref{mainthm:complex-SSM} has dimension zero - this allows us to obtain the same conclusion if we restrict $\lam$ to a one-dimensional family, as in this case.
\item It follows immediately from Theorem \ref{mainthm:E-K}(iii) (using the rotated IFS) that for any $k\ge 1$,
\[
\lim_{\delta\to 0} \dim_H\{ \lam\in (1-\delta,1): \nu_\lam\notin C^k \} = 0.
\]
Recall that for sets much more is true: $A_\lam$ has nonempty interior for \emph{all} $\lam$ near (and not so near) $1$.
\end{enumerate}
\end{rems}

\subsection{More general parametrized families}

Theorems \ref{mainthm:complex-SSM} and \ref{mainthm:convolutions} both assert that in the parametrized family in question, absolute continuity holds (in the supercritical region where the similarity dimension exceeds $2$) outside of a zero-dimensional set of possible exceptions. Although the part of the argument that establishes zero dimension of exceptions for the dimension statement (i.e. Propositions \ref{prop:zero-dim-scc} and \ref{prop:zero-dim-scc-conv}) appear to be specific to these families, it is possible to obtain weaker versions valid for more general parametrized families. As an example, we have:

\begin{prop} \label{prop:parametrized}
Let $\Omega\subset\C$ be an open domain, and suppose $\psi=(\lambda, \mathbf{a}):\Omega\to (\D\setminus\R)\times \C^\Lambda$  is a non-constant analytic map such that, for every pair of different infinite sequences $i,j\in\Lambda^\N$, the map
\[
u\mapsto \sum_{n=0}^\infty \lambda^n(u)  \left(a_{i_n+1}(u)-a_{j_n+1}(u)\right)
\]
is non-constant. Then there exists a set $E\subset\Omega$ with $\dim_H(E)\le 1$ such that for all $(u,p)\in(\Omega\setminus E)\times \PP_\Lambda$ such that $s(\lam(u),p)>2$, the measure $\nu_{\lam(u),\bba(u)}^p$ is absolutely continuous with an $L^q$ density for some $q=q(u)>1$.
\end{prop}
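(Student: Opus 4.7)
The plan is to adapt the convolution scheme of Theorems~\ref{mainthm:complex-SSM} and~\ref{mainthm:convolutions} to the general analytic family $\psi$. For each integer $k \ge 3$ and each $u$ in the piece $P_k := \{u : s(\lam(u),p) > 2 + 4/k\}$, I would write
\[
\nu_{\lam(u),\bba(u)}^p \;=\; \mu_u * \eta_u, \qquad \eta_u := \nu_{\lam^k(u),\bba(u)}^p,\quad \mu_u := \nu_{\lam^k(u),\mathbf{b}(u)}^q,
\]
with $\mathbf{b}$, $q$ built from the $k$-th iterate of the IFS exactly as in the proof of Theorem~\ref{mainthm:complex-SSM}, so that $s(\lam^k(u),q) = (1-1/k)\,s(\lam(u),p) > 2$ on $P_k$. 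By \cite[Corollary~5.5]{ShmerkinSolomyak14} it will suffice to show that, outside a set in $u$ of Hausdorff dimension at most $1$, both $\dim \mu_u = 2$ and $\widehat{\eta_u}$ has power decay at infinity; the full exceptional set $E$ is then the countable union in $k$ of such sets.

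For the Fourier decay of $\eta_u$ I would apply Theorem~\ref{mainthm:E-K}(i) to the analytic map $u \mapsto \lam^k(u)$ (tacitly taking $\lam$ non-constant, the only case in which the proposition is substantive). Since $\lam^k$ is a non-constant analytic function on the complex $1$-dimensional domain $\Omega$, locally a finite power up to biholomorphism, preimages of zero-dimensional sets in $\D$ remain zero-dimensional in $\Omega$; thus $\lam^{-k}(\mathcal{E})$ has $\dim_H = 0$, where $\mathcal{E} \subset \D\setminus\R$ is the zero-dimensional exceptional set provided by Theorem~\ref{mainthm:E-K}(i). One must additionally remove $\{u : \lam^k(u) \in \R\}$, which is contained in the zero set of the non-constant real-analytic function $\Im(\lam^k)$ and hence has Hausdorff dimension at most $1$. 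The side condition that some $a_i(u) \ne a_j(u)$ needed to invoke Theorem~\ref{mainthm:E-K}(i) holds off a discrete set, by applying the hypothesis of the proposition to constant sequences.

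For the dimension statement on $\mu_u$ I would invoke Hochman's Theorem~1.10 from~\cite{Hochman15} directly for the family $\{\mu_u\}_{u\in\Omega}$, viewing $\Omega$ as a $2$-real-dimensional analytic parameter space. That theorem yields an exceptional set of packing (and hence Hausdorff) dimension at most $\ell - 1 = 1$ off which $\dim \mu_u = \min(2, s(\lam^k(u),q)) = 2$, provided the family $(\lam^k(u), \mathbf{b}(u))$ meets the required transversality condition: for every pair of distinct sequences $I, J \in (\Lambda^{k-1})^\N$, the map
\[
u \;\longmapsto\; \sum_{n=0}^\infty \lam^{kn}(u)\bigl(b_{I_{n+1}}(u) - b_{J_{n+1}}(u)\bigr)
\]
must be non-constant. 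Unfolding $b_{i_1\cdots i_{k-1}} = \sum_{j=0}^{k-2} \lam^j a_{i_{j+1}}$ and reindexing, this sum equals $\sum_{m \ge 0} \lam^m(u)(a_{i_{m+1}}(u) - a_{j_{m+1}}(u))$ for sequences $i \ne j \in \Lambda^\N$ obtained by placing the coordinates of $I, J$ at the positions $m \not\equiv k-1 \pmod k$ and letting $i, j$ agree at the remaining positions; since $I \ne J$ forces $i \ne j$, our hypothesis delivers non-constancy.

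The main obstacle I anticipate lies in this last step: verifying that Hochman's Theorem~1.10 applies in the precise formulation needed to our complex-analytic family over a $2$-real-dimensional base (its statement is usually phrased for smooth families over an open set in $\R^\ell$), and carefully threading the non-constancy hypothesis through the $k$-fold unfolding above. Everything else -- the analytic pullback for the Fourier step, the removal of the $\{\lam^k \in \R\}$ locus, and the final convolution argument -- should follow directly from results already established in Sections~2 and~3.
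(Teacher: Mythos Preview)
Your treatment of the case where $\lambda$ is non-constant is essentially the paper's: both factor $\nu=\mu*\eta$, invoke Theorem~\ref{mainthm:E-K}(i) for the Fourier decay of $\eta$, and appeal to \cite[Theorem~1.10]{Hochman15} for $\dim\mu=2$. The paper first changes variable so that $\lambda(u)=u$ (after discarding the critical points of $\lambda$), which is equivalent to your pullback argument.

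The gap is your dismissal of the case $\lambda$ constant as ``not substantive.'' It is substantive: if $\lambda\equiv\lambda_0$ while $\mathbf a$ varies --- which is permitted, since only $\psi$ is assumed non-constant --- the hypotheses of the proposition can be satisfied (Theorem~\ref{mainthm:convolutions} is effectively a special case), and then your Fourier-decay step collapses. Theorem~\ref{mainthm:E-K}(i) produces a zero-dimensional exceptional set in the $\lambda$ variable; if the fixed value $\lambda_0^k$ happens to lie in it (and you have no control over this), you obtain no decay for \emph{any} $u\in\Omega$. The paper handles this case separately: it first argues that necessarily $|\Lambda|\ge 3$ (for $|\Lambda|=2$ and $\lambda$ constant, the non-degeneracy hypothesis fails in the supercritical regime), then normalizes so that $a_0\equiv 0$, $a_1\equiv 1$, $a_2\equiv u$, and proceeds as in the proof of Theorem~\ref{mainthm:convolutions}, using Theorem~\ref{mainthm:E-K}(ii) --- which gives a zero-dimensional exceptional set in the \emph{translation} parameter $u$ --- in place of Theorem~\ref{mainthm:E-K}(i).
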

\begin{proof}
Suppose first $\lambda(u)$ is not constant. Then, after removing critical points of $\lam$ from $\Omega$, splitting into appropriate domains and changing variable, we can assume that $\lambda(u)=u$. We are then in a situation nearly identical to Theorem \ref{mainthm:complex-SSM}, except that Proposition \ref{prop:zero-dim-scc} does not apply, but we can instead appeal to Hochman's general result on dimension of exceptions, \cite[Theorem 1.10]{Hochman15}.

If, instead, $\lambda$ is constant, then necessarily $|\Lambda|\ge 3$, for otherwise the non-degeneracy condition would fail. Indeed, if $|\Lambda|=2$ and $s(\lam,p)>2$, then there are different words $i,j\in\{0,1\}^\N$ such that $\sum_{n=1}^\infty \lam^n(a_{i_n+1}-a_{j_n+1})=0$ (otherwise, the IFS $(\lambda x, \lambda x+1)$ would satisfy the open set condition, and so $s(\lam,p)\le 2$). After changing variable in the usual way, we can then assume that $a_0\equiv 0, a_1\equiv 1$ and $a_2\equiv u$,  and continue arguing as in the proof of Theorem \ref{mainthm:convolutions}, but again using  \cite[Theorem 1.10]{Hochman15}.
\end{proof}

\subsection{Further results for self-similar sets}

So far, we had to assume that the iterated function systems we work with are homogeneous (all maps have the same linear parts); this ensures that the self-similar measures have a convolution structure, which is central to our method. However, for self-similar sets, one can use a standard approximation argument to obtain similar results also in the non-homogeneous situation. For example, we have the following consequence of Theorem \ref{mainthm:convolutions}.

\begin{cor}
Let $A\subset\C$ be any self-similar set (that is, $A=\bigcup_{i\in\Lambda} f_i(\Lambda)$ for some contracting similarities $f_i$). If $\dim_H A>1$, then
\[
\dim_H(\{ u\in \C: \mathcal{L}(A+u A)=0\})=0.
\]
\end{cor}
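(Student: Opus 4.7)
The plan is to reduce to Theorem \ref{mainthm:convolutions} by finding a homogeneous self-similar subset $B\subset A$ of dimension strictly greater than $1$ and with non-real contraction ratio. Write the defining IFS as $f_i(z)=\lambda_i z+a_i$, $i\in\Lambda$. For each $n\ge 1$, the iterated family $(f_w)_{w\in\Lambda^n}$ has maps with complex linear parts $\lambda_w=\prod_k \lambda_{w_k}$; grouping these words by the value of $\lambda_w$ and applying a standard multinomial/entropy argument (as in the proof of Moran's formula), one can select a subset $W\subset \Lambda^n$ on which all $\lambda_w$ agree (or, when the $\lambda_i$ are multiplicatively free, lie in an exponentially small neighborhood of a common $\lambda^*$), with $\log|W|/(-\log|\lambda^*|)$ tending to $\dim_H A>1$ as $n\to\infty$. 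Passing to an exactly homogeneous IFS $\Phi=(z\mapsto\lambda^* z+b_w)_{w\in W}$ and keeping its attractor $B$ inside $A$ is the ``standard approximation argument'' alluded to in the preamble; for the approximation accuracy small enough we obtain $\dim_H B>1$.

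Next, I would check the remaining hypotheses of Theorem \ref{mainthm:convolutions}. Non-real $\lambda^*$ is ensured by selecting word-types biased toward some $\lambda_i$ with $\arg\lambda_i\notin\pi\Z$. When all $\lambda_i$ are real, so that $A$ does not lie in a line (as $\dim_H A>1$), one can either exploit a rotational self-similarity of $A$ as in Subsection \ref{subsec:gasket}, or use the inclusion
\[
A+uA \supset \lambda_i\bigl(A+(u\lambda_j/\lambda_i)A\bigr)+(a_i+u a_j)
\]
to transport $u$ into the regime where Theorem \ref{mainthm:convolutions} applies. Absence of super-exponential concentration of cylinders for $\Phi$ is inherited from the iterated IFS $(f_w)_{w\in\Lambda^n}$, whose $k$-level cylinders contain those of $\Phi$; this holds in the original IFS by combining the hypothesis $\dim_H A>1$ with Theorem \ref{thm:hochman}, since otherwise the similarity dimension (and hence the Hausdorff dimension) would drop below any fixed threshold.

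Once $\Phi$ has been produced, Theorem \ref{mainthm:convolutions} supplies a set $E\subset\mathbb{C}$ with $\dim_H E=0$ such that $\mathcal{L}(B+uB)>0$ for every $u\in\mathbb{C}\setminus E$. Since $B\subset A$ gives $A+uA\supset B+uB$, we conclude $\{u:\mathcal{L}(A+uA)=0\}\subset E$, and so the exceptional set has Hausdorff dimension zero. I expect the main obstacle to be the first step: when the multiplicative semigroup generated by $\{\lambda_i\}$ has no nontrivial relations, a genuinely homogeneous sub-IFS of dimension exceeding $1$ need not exist, and one must work with almost-homogeneous sub-IFS and perturb carefully, arranging that the resulting exactly-homogeneous attractor still lies inside $A$ so that the positive-measure conclusion transfers.
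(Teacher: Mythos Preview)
Your overall strategy matches the paper's: find a homogeneous self-similar subset $B\subset A$ satisfying the open set condition with $\dim_H B>1$, then invoke Theorem~\ref{mainthm:convolutions}. The paper does not carry out this construction but cites \cite[Lemma~3.6]{Orponen12} and \cite[Proposition~6]{PeresShmerkin09} for it; you attempt it by hand, and two of your steps have genuine problems.

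First, the concern about the multiplicatively free case is unfounded. For $w\in\Lambda^n$ the linear part $\lambda_w=\prod_k\lambda_{w_k}$ depends only on the \emph{type} of $w$ (the multiset of letters), since complex multiplication commutes. Grouping by type therefore always yields an \emph{exactly} homogeneous sub-IFS; there are only $O(n^{|\Lambda|-1})$ types, so the weighted pigeonhole you describe gives a type whose sub-IFS has similarity dimension within $O((\log n)/n)$ of that of the $n$-th iterate, with attractor automatically inside $A$. No perturbation is needed, and the obstacle you flag in the final paragraph does not exist. Second, and more seriously, your argument that the sub-IFS avoids super-exponential concentration is wrong. You appeal to Theorem~\ref{thm:hochman} together with $\dim_H A>1$, but that theorem only says that \emph{absence} of super-exponential concentration (and $\lambda\notin\R$) implies no dimension drop; it gives no information in the other direction. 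Nothing in the hypothesis $\dim_H A>1$ prevents the original IFS from having exact overlaps, and a subfamily of iterates would inherit them. The route taken in the cited references is different: one first extracts from $A$ a sub-IFS satisfying the strong separation condition with dimension arbitrarily close to $\dim_H A$ (using only separated cylinders at a well-chosen scale), and homogenizes afterwards. You need to interpose such a step. Finally, your handling of the non-real requirement for $\lambda^*$ is incomplete: when every $\lambda_i$ is real, a generic self-similar set has no rotational symmetry to exploit, and the inclusion you write only rescales $u$ by the real factor $\lambda_j/\lambda_i$, so it cannot produce a non-real contraction ratio. (The paper's two-line proof does not address this edge case explicitly either.)
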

\begin{proof}
There exists a self-similar set $A'\subset A$ which is generated by a homogeneous IFS, satisfies the open set condition, and has dimension $>1$: this follows e.g. from \cite[Lemma 3.6]{Orponen12} and \cite[Proposition 6]{PeresShmerkin09}. Hence the claim follows by applying Theorem \ref{mainthm:convolutions} to $A'$.
\end{proof}
In the same vein, it is possible to obtain a version of Proposition \ref{prop:parametrized} for parametrized families of self-similar sets (where the generating IFSs are not necessarily homogeneous). We leave the precise formulation to the interested reader.

\subsection{Higher dimensions?}

In this paper, we have adapted the method from \cite{Shmerkin13, ShmerkinSolomyak14} to self-similar sets and measures in the plane. Unfortunately, the approach we use breaks down in higher dimensions. Indeed, we need to work with IFSs in which the linear parts of the maps are equal, in order to be able to split the measures of interest as the convolution  of a measure of full dimension, and another measure with power Fourier decay. The main issue is in establishing full dimension of the corresponding measures in dimensions $3$ and higher: because the linear parts are all equal, they act reducibly and hence the results of Hochman \cite{Hochman15} are no longer applicable.


\end{document}